\theoremstyle{plain}
\newtheorem{thm}{\sc \bf{Theorem}}[section]
\newtheorem{lem}[thm]{\sc \bf{Lemma}}
\newtheorem{remark}[thm]{\sc \bf{Remark}}
\title{\bf{On Intersection Representations and Clique Partitions of Graphs}\footnote{
Work supported partly by the National Science Council of Taiwan,
Republic of China, under the Grant No. NSC 96-2115-M-029-001}}
\author{Tao-Ming Wang\footnote{The corresponding author with e-mail wang@thu.edu.tw}~ and Jun-Lin Kuo \\\\
Institute of Applied Mathematics \\and\\
Department of Mathematics \\
Tunghai University\\
Taichung, Taiwan 40704 }
\date{\today}
\begin{document}
\maketitle

\begin{abstract}
A multifamily set representation of a finite simple graph $G$ is a
multifamily $\mathcal{F}$ of sets (not necessarily distinct) for
which each set represents a vertex in $G$ and two sets in
$\mathcal{F}$ intersects if and only if the two corresponding
vertices are adjacent. For a graph $G$, an \textit{edge clique
covering} (\textit{edge clique partition}, respectively)
$\mathcal{Q}$ is a set of cliques for which every edge is contained
in \textit{at least} (\textit{exactly}, respectively) one member of
$\mathcal{Q}$. In 1966, P. Erd\"{o}s, A. Goodman, and L. P\'{o}sa
(The representation of a graph by set intersections,
\textit{Canadian J. Math.}, \textbf{18}, pp.106-112) pointed out
that for a graph there is a one-to-one correspondence between
multifamily set representations $\mathcal{F}$ and clique coverings
$\mathcal{Q}$ for the edge set. Furthermore, for a graph one may
similarly have a one-to-one correspondence between particular
multifamily set representations with intersection size at most one
and clique partitions of the edge set. In 1990, S. McGuinness and R.
Rees (On the number of distinct minimal clique partitions and clique
covers of a line graph, \textit{Discrete Math.} \textbf{83} (1990)
49-62.) calculated the number of distinct clique partitions for line
graphs. In this paper, we study the set representations of graphs
corresponding to edge clique partitions in various senses, namely
family representations of \textit{distinct} sets, antichain
representations of \textit{mutually exclusive} sets, and uniform
representations of sets with the \textit{same cardinality}. Among
others, we completely determine the number of distinct family
representations and the number of antichain representations of line
graphs.
\end{abstract}

\section{\bf{Background and Introduction}}

By a \textit{multigraph} $M=(V(M),E(M),q)$ we mean a triple
consisting of a set $V(M)$ of \textit{vertices}, a set $E(M)$ of
\textit{edges}, and a function $q$ defined in the following manner.
For each unordered pair $\{u,v\} \subset V(M)$, let $q(u,v)$ be the
number of \textit{parallel edges} joining $u$ with $v$, and we call
it the \textit{multiplicity}. If $q(u,v)\neq 0$, then we say that
$\{u,v\}$ is an \textit{edge} of $M$. In this paper unless otherwise
stated we consider only \textit{finite, undirected, simple} graphs.
That is, $q(u,v)\leq 1$ for every $\{u,v\}\subset V(M)$ and
$q(u,u)=0$ for every $u\in V(M)$. We simply call them graphs instead
of multigraphs throughout this article.
%We denote the set of edges of $M$ by $E(M)$, that is,
%in this paper $E(M)=\{\{u,v\}:q(u,v)= 1, u\neq v\}$.
For a subset $S\subseteq V(M)$, $\langle S\rangle_V$ denotes the
\textit{subgraph induced by} $S$. For a vertex $v$ in a graph $G$,
let $d_G(v)$ or $d(v)$ denote the \textit{degree} of $v$ in $G$
which is the number of neighbors of $v$ in $G$. For other
terminology we do not define here please refer to \cite{west}.

Let $\mathcal{F}=\{S_1,...,S_p\}$ be a multifamily of nonempty
subsets of a finite non-empty set $X$, where $S_1,...,S_p$ might not
be distinct. $\textbf{S}(\mathcal{F})$ denotes the union of sets in
$\mathcal{F}$. The \textit{intersection multigraph} of
$\mathcal{F}$, denoted $\Omega(\mathcal{F})$, is defined by
$V(\Omega(\mathcal{F}))=\mathcal{F}$, with $|S_i\cap
S_j|=q(S_i,S_j)$ whenever $i\neq j$.

We say that a multigraph $M$ is (isomorphic to) an intersection
multigraph on $\mathcal{F}$ if there exists a multifamily
$\mathcal{F}$ such that $M\cong \Omega(\mathcal{F})$. In this case,
we also say that $\mathcal{F}$ is a \textit{multifamily
representation} of the multigraph $M$. It is easy to see that every
multigraph is isomorphic to an intersection multigraph on some
multifamily, therefore one may define the \textit{multifamily
intersection number}, denoted $\omega_{{m}}(M)$, of a given
multigraph $M$ to be the minimum cardinality of a set $X$ such that
$M$ is isomorphic to an intersection multigraph on a multifamily
$\mathcal{F}$ of subsets of $X$. In this case we also say that
$\mathcal{F}$ is a \textit{minimum multifamily representation} of
$M$.

If a multigraph $M$ is isomorphic to an intersection multigraph
$\Omega(\mathcal{F})$, then we have the following additional
conventions:
\begin{enumerate}
%\item $\mathcal{F}$ is a {\bf multifamily representation} of $M$ if the
%sets in $\mathcal{F}$ need not be distinct.  If in addition, all the
%sets in $\mathcal{F}$ have same cardinality, then $\mathcal{F}$ is a
%{\bf uniform multifamily representation};
\item $\mathcal{F}$ is called a {\bf family representation} of the multigraph $M$
if $\mathcal{F}$ is a family of {\bf distinct} subsets of $X$;
\item $\mathcal{F}$ is called an {\bf antichain representation} of $M$ if
$\mathcal{F}$ is an antichain with respect to set inclusions, that
is, any two sets in $\mathcal{F}$ are mutually exclusive.
\item $\mathcal{F}$ is called an {\bf uniform representation} of $M$ if
$\mathcal{F}$ is an uniform family of distinct sets with the same
cardinality.
\end{enumerate}
It is not hard to see that every multigraph is isomorphic to an
intersection multigraph $\Omega(\mathcal{F})$, where $\mathcal{F}$
can be required to be either a family representation, an antichain
representation, or an uniform representation. Therefore it makes
sense to define the following notions.
\begin{enumerate}
\item The {\bf family intersection number} of $M$, denoted $\omega_f(M)$, is
the minimum cardinality of $\textbf{S}(\mathcal{F})$ for which $M$
has a family representation $\mathcal{F}$.
\item The {\bf antichain intersection number} of $M$, denoted
$\omega_a(M)$, is the minimum cardinality of
$\textbf{S}(\mathcal{F})$ for which $M$ has an antichain
representation $\mathcal{F}$.
\item The {\bf uniform intersection number} of $M$, denoted
$\omega_u(M)$, is the minimum cardinality of
$\textbf{S}(\mathcal{F})$ for which $M$ has an uniform
representation $\mathcal{F}$.
\end{enumerate}
%
%Note that in considering various intersection numbers, we can assume
%without loss of generality that $\textbf{S} =
%\textbf{S}(\mathcal{F})$.
%
\begin{remark} \label{}
Clearly every uniform representation is an antichain representation,
every antichain representation is a family representation, and every
family representation is a multifamily representation. Hence
$\omega_m(M) \leq \omega_f(M) \leq \omega_a(M) \leq \omega_u(M)$.
Note that given a multifamily representation $\{S_v\mid v\in V(M)\}$
of $M$ and a vertex subset $S\subseteq V(M)$, then $\{S_v\mid v\in
S\}$ form a multifamily representation of $\langle S \rangle_V$.
Thus we know that $\omega_m(M) \geq \omega_m(\langle S \rangle_V)$
for any $S\subseteq V(M)$. Similarly, $\omega_f(M) \geq
\omega_f(\langle S \rangle_V)$, $\omega_a(M) \geq \omega_a(\langle S
\rangle_V)$, and $\omega_u(M) \geq \omega_u(\langle S \rangle_V)$
for any $S\subseteq V(M)$.
\end{remark}

We may define the notion of uniqueness of representations of
intersection multigraphs. A multigraph $M$ is said to be {\bf
uniquely intersectable with respect to multifamilies (u.i.m.)} if
given a set $X$ with $|X|=\omega_m(M)$ and for any two families
$\alpha$ and $\beta$ of subsets of $X$ such that $\alpha$ and
$\beta$ are both multifamily representations of $M$, then $\beta$
can be obtained from $\alpha$ by a permutation of elements of $X$.
Similarly $M$ is {\bf uniquely intersectable with respect to
families (u.i.f.)} {\bf uniquely intersectable with respect to
antichains (u.i.a.)}, and {\bf uniquely intersectable with respect
to uniform families (u.i.u.)} are also defined.

Given a graph $G$, $Q\subseteq V(G)$ is said to be a \textit{clique}
of $G$ if every pair of distinct vertices $u,v$ in $Q$ are adjacent.
A \textit{clique partition} $\mathcal{Q}$ of a graph is a set of
cliques such that every pair of distinct vertices $u,v$ in $V(G)$
appears in exactly one clique in $\mathcal{Q}$ and for each
\textit{isolated vertex}, we need to use at least one
\textit{trivial clique} with only one vertex in $\mathcal{Q}$ to
cover it. The minimum cardinality of a clique partition of $G$ is
called the \textit{clique partition number} of $G$, and is denoted
by $cp(G)$. We refer to a clique partition of $G$ with the
cardinality $cp(G)$ as a \textit{minimum clique partition} of $G$.
Note that a clique partition $\mathcal{Q}$ of a graph $G$ gives rise
to a clique partition of $G-v$ by deleting the vertex $v$ from each
clique in $\mathcal{Q}$. Thus $cp(G)$ is not less than the clique
partition number of any induced subgraph of $G$.\\

In the following, we describe the correspondence between the
multifamily set representations and clique partitions of the edge
set for a multigraph $M=(V(M),E(M),q)$.

We first construct a clique partition
\begin{align*}
\mathcal{Q}=\{Q_1,...,Q_p\}
\end{align*}
of $M$, then with each clique $Q_k$ we associate an element $e_k$
and with each vertex $v_{\alpha}$ we associate a set
$S_{\mathcal{Q}}(v_\alpha)$ of elements $e_k$, where
\begin{align*}
e_k\in S_{\mathcal{Q}}(v_\alpha) \Leftrightarrow v_\alpha \in Q_k,
\end{align*}
that is, $S_{\mathcal{Q}}(v_\alpha)$ is the collection of those
elements for which the corresponding cliques contains $v_\alpha$.
Thus we obtain
\begin{align*}
\mathcal{F}(\mathcal{Q})\equiv\{S_{\mathcal{Q}}(v):v\in V(M)\}.
\end{align*}
Then clearly
\begin{align*}
\textbf{S}(\mathcal{F}(\mathcal{Q}))\equiv \bigcup_{v\in
V(M)}S_\mathcal{Q}(v)
\end{align*}
contains $p$ elements, and
\begin{align*}
|S_\mathcal{Q}(v_\alpha)\cap S_{\mathcal{Q}}(v_{\beta})|=q(v_{\alpha},v_{\beta}),
\end{align*}
since there is exactly $q(v_\alpha,v_\beta)$ cliques simultaneously
containing the two vertices $v_\alpha$ and $v_\beta$. Thus we have
constructed a multifamily representation
$$\mathcal{F}(\mathcal{Q})=\{S_{\mathcal{Q}}(v):v\in V(M)\}$$ from the clique partition $\mathcal{Q}$ of $M$, where
$$|\textbf{S}(\mathcal{F}(\mathcal{Q}))|\equiv |\bigcup_{v\in V(M)}S_\mathcal{Q}(v)|=p=|\mathcal{Q}|.$$

Conversely, given a multifamily representation
$\mathcal{F}=\{S_1,...S_n\}$ of $G$ with vertex set
$V(G)=\{v_1,...,v_n\}$, where $S_\alpha$ correspond to the set
attaching to $v_\alpha$, then we can also construct a clique
partition of $G$ by the following way.

Let $$\textbf{S}(\mathcal{F})\equiv
\bigcup_{\alpha=1}^nS_\alpha=\{e_1,...,e_p\}.$$ For each fixed $e_k$
in $\textbf{S}(\mathcal{F})$ we form a clique $Q_\mathcal{F}(e_k)$
using those vertices $v_\alpha$ such that the set $S_\alpha$
attaching to it contains $e_k$. Clearly each $Q_\mathcal{F}(e_k)$ is
indeed a clique of $G$. Thus we obtain
$$\mathcal{Q}(\mathcal{F})=\{Q_\mathcal{F}(e_1),...,Q_\mathcal{F}(e_p)\}.$$ and
\begin{align*}
q(v_\alpha,v_\beta)&=|S_\alpha \cap S_\beta|\\
= \mbox{the number of cliques in } \mathcal{Q}(\mathcal{F}) &\mbox{
simultaneously containing } v_\alpha \mbox{ and } v_\beta,
\end{align*}
since each element in $S_\alpha$ exactly represent a clique in
$\mathcal{Q}(\mathcal{F})$ containing $v_\alpha$. Thus we have
constructed a clique partition $\mathcal{Q}(\mathcal{F})$ of $M$
from the multifamily representation $\mathcal{F}$ of $M$, where
$$|\mathcal{Q}(\mathcal{F})|=p=|\bigcup_{\alpha=1}^nS_\alpha|\equiv |\textbf{S}(\mathcal{F})|.$$

From above, we may treat a graph $G$ as a special case of a
multigraph and hence we have the
correspondence for $G$, and in particular $\omega_m(G)=cp(G)$.\\

Intersection graphs and set representations of graphs were first
introduced and studied by E. Szpilrajn-Marczewski \cite{ESM} in
1945, and P. Erd\"{o}s, A. Goodman, and L. P\'{o}sa \cite{erdos} in
1966. The set representations in various senses such as family
representations, antichain representations, uniform representations,
and the associated uniqueness properties were studied in literatures
over the decades \cite{alter, bylka,tsuchiya2,tsuchiya, wang}. In
1997, Bylka and Komar \cite{bylka} tried to characterize the line
graphs with a unique multifamily representation, in other words, to
characterize all line graphs with a unique edge clique partition.
They studied the problem and solved for the characterization with
one case unsettled. In fact by S. McGuinness and R. Rees's results
in \cite{mcguinness2} (Theorem~\ref{Mc line thm} in this paper), one
may solve the remaining case and complete the whole characterization
of the uniqueness of the multifamily representation for line graphs,
which was already pointed out in T.-M. Wang's Ph.D. thesis
\cite{wang} in 1997.

In this paper, we study and completely classify the family and
antichain representations for line graphs, from which the associated
uniqueness results are derived.
%In section 1 we will narrate this correspondence in full detail. If
%a multiafmily representation of a multigraph $M$ has its members
%pairwisely distinct, then it is designated as a representation of
%$M$.
In section 2 and section 3, we describe the relationship between the
theory of finite projective plane and edge clique partition of
complete graphs, then calculate the intersection numbers and
classify minimum set representations of a complete graph $K_n$ in
various senses. In following section 4 and section 5, we completely
determine the number of distinct minimum family representations and
minimum antichain representations of a line graph, respectively. In
section 6 we conclude with certain future research directions.
%An antichain representation of a graph $G$ is a representation of
%$G$ of the sets without pairwise inclusion relations.
%In sections 2 and 3, we utilize some results in the theory of finite
%linear space to characterize all manners for forming a minimal
%family representation, or a minimal antichain representation of a
%complete graph $K_n$. In section 4 and section 5 we make use of the
%results in sections 2, 3 to characterize all manners to form a
%minimum family representation and a minimum antichain representation
%of line graphs.

\section{\bf{Edge Clique Partitions and Finite Projective Spaces}}

A \textit{finite} \textit{linear sapce} $\Gamma
=(\mathcal{P},\mathcal{L})$ is a system consisting of a finite set
$\mathcal{P}$ of $n$ \textit{points} and a set $\mathcal{L}$ of
\textit{lines} satisfying the following axioms.

\begin{description}
\item [\quad (L1)] Any line has at least two points.
\item [\quad (L2)] Two points are on precisely one line.
\item [\quad (L3)] Any line has at most $n-1$ points.
\end{description}
If a space satisfy \textbf{(L1)} and \textbf{(L2)} but not \textbf{(L3)},
then clearly this space contain a unique line. This type of spaces is
referred to as \textit{trivial linear space}.

Suppose that $n\geq 3$. Let $\mathcal{Q}$ be a clique partition of
the complete graph $K_n$ such that each member of $\mathcal{Q}$ has
at least 2 and no more than $n-1$ vertices. Let
$\Gamma(\mathcal{Q})$ be the system whose set of points is the
vertex set of $K_n$, and whose lines are the members of
$\mathcal{Q}$. Incidence is defined as following. A points $v$ is
incident with a line $Q$ if $v$ is a vertex of $Q$. Then
$\Gamma(\mathcal{Q})$ is a finite linear space. Conversely, if
$\Gamma$ is a finite linear space on $n$ points, then there is a
clique partition $\mathcal{Q}$ of $K_n$ such that $\Gamma
=\Gamma(\mathcal{Q})$, where each member of $\mathcal{Q}$ has at
least 2 and no more than $n-1$ vertices.

Thus there is a one-one correspondence between all clique partitions
of the complete graph $K_n$ by cliques with cardinality at least 2
and at most $n-1$, and all finite linear spaces with $n$ points.

A \textit{projective plane} is a finite linear spaces $\Pi$
satisfying further the following two axioms.

\begin{description}
\item [\quad (P1)] Any two distinct lines have a point in common.
\item [\quad (P2)] There are four points, no three of which are on a common line.
\end{description}

Suppose that $\Pi$ is a projective plane with a finite number $n$ of
points and a finite number $l$ of lines. Then it is probative that
for some integer $k\geq 2$, $n=l=k^2+k+1$, and $\Pi$ has point and
line regularity $k+1$, where each point is on exactly $k+1$ lines
and each line contains exactly $k+1$ points. We call such a number
$k$ the order of the projective plane. Besides, any two lines in a
projective plane intersect on a common point, or paraphrased into
terms of clique partition, any two cliques intersect on a common
vertex.

The smallest projective plane has order $k=2$, which is the
\textit{Fano Plane}, as illustrated in Figure 1. Note that the
segments (straight or round) passing through
$\{a,b,c\},\{c,d,e\},\{a,f,e\},\{a,g,d\},\{b,g,e\},\{f,g,c\},\{b,d,f\}$
respectively stand for seven lines.

\begin{figure}[h]
\label{fano} \centering
\includegraphics[width=0.45\textwidth]{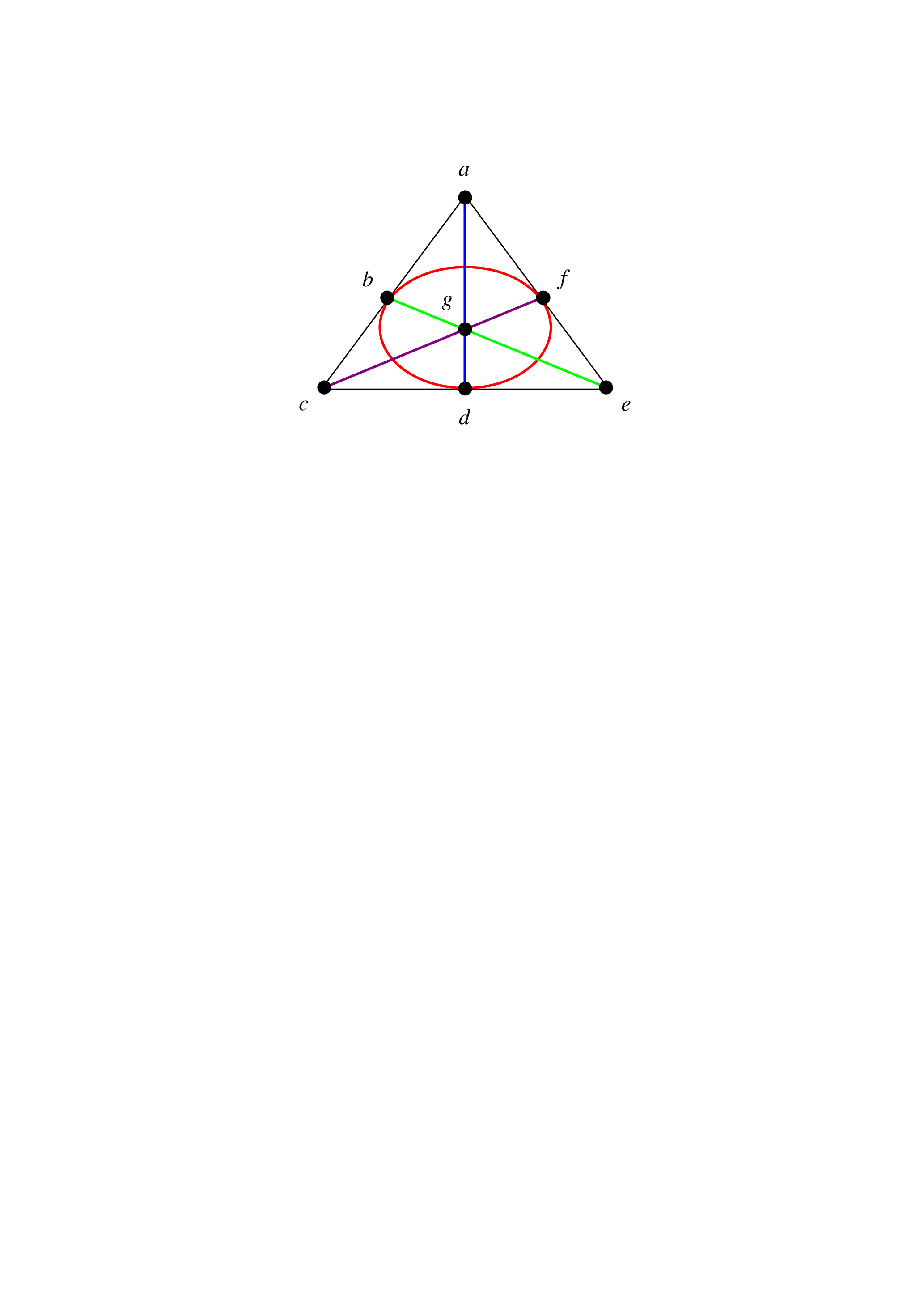}
\caption{Fano Plane}
\end{figure}

It is well known that there are unique projective planes of orders
2, 3, 4, 5, 7, and 8 respectively while there are none of order 6
and 10 \cite{lam}. There are at least 4 non-isomorphic projective
planes of order 9, but no one yet know the exactly number. It is
also known that one may construct a finite projective plane of order
$p^k$, where $p$ is prime, based on the theory of finite fields.
%there exists a field with $k$ elements and thus by
%theorem~\ref{primepower}

In 1948, de Bruijn and Erd\"{o}s proved a theorem about linear space
which we paraphrase in terms of clique partition as follows.

\begin{thm} \label{deBr} {\bf (de Bruijn and Erd\"{o}s\cite{bruijn}, 1948)}
If $\mathcal{Q}$ with $|\mathcal{Q}|>1$ is a clique partition of
$K_n$ with $n\geq 3$, no one of whose members is a trivial clique,
that is, the clique consisting of one single vertex, then
$|\mathcal{Q}|\geq n$, with equality if and only if

\begin{description}
\item [\quad (a)] $\mathcal{Q}$ consists of one clique on $n-1$ vertices and $n-1$ copies of $K_2$,
\item [\quad \mbox{}] or
\item [\quad (b)] The finite linear space corresponding to $\mathcal{Q}$ is a projective plane.
\end{description}
\end{thm}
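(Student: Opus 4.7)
The plan is to reduce the theorem to a statement about the finite linear space $\Gamma(\mathcal{Q})$ via the bijection set up in the preceding paragraphs, and then prove that any nontrivial finite linear space on $n$ points has at least $n$ lines, characterizing equality. Write $\mathcal{Q}=\{Q_1,\dots,Q_m\}$ with $k_i=|Q_i|\ge 2$, and for each vertex $v$ let $r_v$ denote the number of cliques of $\mathcal{Q}$ containing $v$. Two bookkeeping identities are in play: the incidence identity $\sum_v r_v=\sum_i k_i$ and the edge identity $\sum_i\binom{k_i}{2}=\binom{n}{2}$, the latter recording that each edge of $K_n$ lies in exactly one clique. A quick preliminary is that $r_v\ge 2$ for every $v$: otherwise the single clique through $v$ would have to contain all $n-1$ neighbors of $v$ and thus equal $K_n$, contradicting $k_i\le n-1$.

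The key geometric input is the inequality $r_v\ge k_i$ whenever $v\notin Q_i$: the $k_i$ edges joining $v$ to the vertices of $Q_i$ lie in $k_i$ pairwise distinct cliques, since any clique covering two of them would contain two vertices of $Q_i$ and therefore share an edge with $Q_i$, violating the partition property. From here I would derive $m\ge n$ via a Fisher-style rank argument. Letting $A$ be the $n\times m$ vertex-clique incidence matrix, a direct computation using the edge identity gives $AA^T=\mathrm{diag}(r_1-1,\dots,r_n-1)+J$, where $J$ is the $n\times n$ all-ones matrix. Since $r_v\ge 2$ the diagonal term is positive definite and $J$ is positive semidefinite, so $AA^T$ is positive definite of rank $n$; consequently $n=\mathrm{rank}(AA^T)\le\mathrm{rank}(A)\le m$.

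The main obstacle is the equality case $m=n$. I would split according to whether $\mathcal{Q}$ contains a clique of size $n-1$. If some $Q_j$ has $k_j=n-1$, then for the unique vertex $v\notin Q_j$ the geometric inequality gives $r_v\ge n-1$, and every other clique through $v$ must be a $K_2$ (a larger one would contain at least two vertices of $Q_j$, violating the partition property); counting then yields exactly the $n-1$ copies of $K_2$ of case (a). Otherwise every $k_i\le n-2$, so every vertex lies off some clique, and tightness of the geometric inequality across all non-incident pairs, together with the two bookkeeping identities, propagates to force a common value $r_v=k_i=k+1$. Substituting into $\sum_i\binom{k_i}{2}=\binom{n}{2}$ then yields $n=k^2+k+1$ and forces any two cliques to meet in a single vertex, identifying $\Gamma(\mathcal{Q})$ as a projective plane of order $k$, which is case (b). The delicate step is this last propagation, where the interaction between the tight geometric inequality and the identities must be exploited carefully to force constant parameters.
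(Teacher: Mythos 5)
This is Theorem~2.1 of the paper, which is quoted from de Bruijn and Erd\H{o}s (1948) and used as a black box; the paper contains no proof of it, so there is nothing in-paper to compare your argument against, and I can only assess the proposal on its own terms. The first two-thirds of your plan are sound: the reduction to linear spaces, the observation $r_v\ge 2$, the lemma $r_v\ge k_i$ for $v\notin Q_i$, and the rank bound $n=\mathrm{rank}(AA^T)\le \mathrm{rank}(A)\le m$ from $AA^T=\mathrm{diag}(r_1-1,\dots,r_n-1)+J$ (where the off-diagonal entries equal $1$ by the partition property, not by the edge identity) together give a correct proof of $|\mathcal{Q}|\ge n$, and the near-pencil branch of the equality case is handled correctly.

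The genuine gap is in the projective-plane branch, where you invoke ``tightness of the geometric inequality across all non-incident pairs'' as if it were available. Nothing you have proved delivers it: the rank argument yields $m\ge n$ with no information about which inequalities $r_v\ge k_i$ are tight when $m=n$, and the two bookkeeping identities are too weak to force tightness. Indeed, summing the slacks over all non-incident pairs gives $\sum_{v\notin Q_i}(r_v-k_i)=\sum_i k_i(n-k_i)-\sum_v r_v(m-r_v)=\sum_i k_i^2-\sum_v r_v^2$ when $m=n$, and since $\sum_v r_v(r_v-1)=\sum_{i\ne j}|Q_i\cap Q_j|\le n(n-1)=2\binom{n}{2}=\sum_i k_i(k_i-1)$, this total slack is merely $\ge 0$ --- the same direction as the term-by-term inequality, so no equality is forced and the ``propagation'' never starts. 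The standard repair is to establish the inequality (or at least revisit the case $m=n$) by the weighted double count $1=\sum_{L}\sum_{p\notin L}\frac{1}{n(n-|L|)}\le\sum_{p}\sum_{L\not\ni p}\frac{1}{n(n-r_p)}=1$, which is legitimate because $p\notin L$ implies $n-|L|\ge n-r_p>0$; equality throughout forces $r_p=|L|$ for every non-incident pair. From there your outline does close: two disjoint lines $L,L'$ would give $r_p\ge 1+|L'|>r_p$ for $p\in L$, so any two lines meet, whence all line sizes and point degrees equal a common $k+1$, $n=k^2+k+1$, and $\Gamma(\mathcal{Q})$ is a projective plane. Without that supplementary counting step, the equality characterization is not proved.
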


Note that the linear spaces corresponding to the class of clique
partitions in (a) are traditionally referred to as
\textit{near-pencil} in finite linear space theory. The example for
the above Theorem~\ref{deBr} can refer to the one that the
nontrivial clique partitions of $K_7$ use at least 7 cliques, and
the extremal cases happen when they consist of either six ${K_2}'s$
with one $K_6$, or seven ${K_3}'s$ corresponding to the Fano plane.
%In later sections, for convenience we will use
%both terms clique partition and finite linear space interchangeably
%by abusing the language.
% The linear spaces corresponding to the
%class of clique partitions in (a) are traditionally referred to as
%\textit{near-pencil} in finite linear space theory. What is the harm
%of somewhat abusing the terminology for both clique partition and
%finite linear space? Afterwards we shall do so.
%Note that Theorem~\ref{deBr} characterizes all those finite linear%
%spaces on $n$ points having exactly $n$ lines, as being the two
%classes of linear spaces described in the above theorem.

\section{\bf{Intersection Representations of Complete Graphs}}

In this section, we calculate the intersection numbers and study the
uniqueness of the minimum representations for complete graphs $K_n$
in various senses. We start from the family representations.

\begin{thm} \label{intersect num K_n}
Let $K_n$ be a complete graph on $n$ vertices, $n\geq 3$.
\begin{enumerate}
\item $\omega_{f}(K_n) = n$.
\item $K_n$ is not \textit{u.i.f.}
\end{enumerate}
\end{thm}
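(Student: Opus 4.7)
The plan is to pipe everything through the correspondence between multifamily representations of $K_n$ and edge clique partitions (spelled out in the introduction), and then to invoke the de Bruijn--Erd\"{o}s bound, Theorem~\ref{deBr}.

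\textbf{Upper bound $\omega_f(K_n)\le n$.} I would exhibit an explicit minimum family representation on a ground set of size $n$ by taking a near-pencil: fix any vertex $v_0\in V(K_n)$ and set $\mathcal{Q}_A=\{V(K_n)\setminus\{v_0\}\}\cup\{\{v_0,u\}:u\ne v_0\}$, a clique partition with exactly $n$ members. Reading off $\mathcal{F}(\mathcal{Q}_A)$ gives $|S(v_0)|=n-1$ and $|S(u)|=2$ with each of the non-apex sets containing a distinct apex-edge element together with the common ``big-clique'' element, so the $n$ sets are pairwise distinct and form a genuine family representation.

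\textbf{Lower bound $\omega_f(K_n)\ge n$.} Let $\mathcal{F}$ be any family representation of $K_n$ with $|\mathbf{S}(\mathcal{F})|=p$, so by the correspondence $\mathcal{Q}(\mathcal{F})$ is a clique partition of $K_n$ into $p$ cliques whose vertex sets are pairwise distinct. I would split on the number of non-trivial members (those of size $\ge 2$) in $\mathcal{Q}(\mathcal{F})$. If this subfamily has more than one member, Theorem~\ref{deBr} forces it to contain at least $n$ cliques, so $p\ge n$. Otherwise its only non-trivial member must be $K_n$ itself, in which case any two vertices not lying in a trivial clique would share the same set $\{e_0\}$; distinctness then leaves at most one such vertex, which forces at least $n-1$ trivial cliques and again gives $p\ge n$.

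\textbf{Non-uniqueness (not u.i.f.).} I would contrast the near-pencil $\mathcal{F}_A=\mathcal{F}(\mathcal{Q}_A)$ with a second minimum family representation $\mathcal{F}_B=\mathcal{F}(\mathcal{Q}_B)$ where $\mathcal{Q}_B=\{V(K_n)\}\cup\{\{u\}:u\ne v_0\}$, again with exactly $n$ cliques. The multiset of sizes $\{|S(v)|:v\in V(K_n)\}$ is $\{n-1,2,\ldots,2\}$ for $\mathcal{F}_A$ and $\{1,2,\ldots,2\}$ for $\mathcal{F}_B$, and these differ for every $n\ge 3$. Since this multiset is invariant under any permutation of $X$, no permutation of $X$ can send $\mathcal{F}_A$ to $\mathcal{F}_B$, so $K_n$ is not u.i.f.

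The main obstacle I anticipate is the trivial-clique bookkeeping in the lower-bound step: Theorem~\ref{deBr} assumes the partition has more than one clique and no trivial members, so both the single-clique case and the $K_n$-plus-trivial-cliques case must be ruled out by a separate hand argument before the theorem can be applied.
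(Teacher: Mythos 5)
Your proposal is correct and follows essentially the same route as the paper: both pass through the clique-partition correspondence, apply the de Bruijn--Erd\H{o}s theorem to the non-trivial part of the partition (your ``trivial cliques'' are exactly the paper's ``monopolized elements''), and certify non-uniqueness by exhibiting the near-pencil alongside the single-clique-plus-trivial-cliques representation. Your explicit use of the size multiset as a permutation invariant is a slightly cleaner way to distinguish the two representations than the paper's full classification, but the substance is the same.
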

%\begin{thm} \label{intersect num K_n}
%Every complete graph $K_n$ with $n\geq 3$ has intersection number
%$n$ and has three manners for forming its minimum representations.
%\end{thm}

\begin{proof}
For any complete graph $K_n$ with $n\geq 3$, we can always construct
a family representation by the following method. Take an element,
say $e_1$ common to the representation sets of all vertices. Then
attach elements $e_2,...,e_{n-1}$ to some $n-1$ vertices of the $n$
vertices, respectively. On the other hand, there cannot exist a
representation $\mathcal{F}$ of $K_n$ with
$|\textbf{S}(\mathcal{F})|\leq n-1$, for otherwise we can first
delete all elements in $\textbf{S}(\mathcal{F})$ that appear in the
representation set of only one vertex, which we would referred to as
\textit{monopolized element} in the rest of this paper, from the
representation sets of all vertices and say the resulting
representation $\mathcal{F}'$. Note that $\mathcal{F}'$ is a
multifamily representation of $K_n$, since monopolized elements have
nothing to do with multifamily representation of a multigraph. Now
we take $\mathcal{Q}(\mathcal{F}')$. Note that
$|\mathcal{Q}(\mathcal{F}')|\leq n-1$. Clearly
$\mathcal{Q}(\mathcal{F}')$ is a clique partition of $K_n$
containing no trivial clique. By theorem~\ref{deBr} and the fact
that $|\mathcal{Q}(\mathcal{F}')|\leq n-1$, we know that
$|\mathcal{Q}(\mathcal{F}')|=1$, that is,
$\mathcal{Q}(\mathcal{F}')$ consists of only one clique, containing
all $n$ vertices of $K_n$. But clearly we cannot recover
$\mathcal{F}$ from
$\mathcal{F}(\mathcal{Q}(\mathcal{F}'))=\mathcal{F}'$ by adding
monopolized elements to the members of $\mathcal{F}'$, since
$\mathcal{F}$ is a representation of $K_n$ with
$|\textbf{S}(\mathcal{F})|\leq n-1$, a contradiction. From above we
know that $\omega(K_n)=n$.

Now we investigate the uniqueness of $K_n$'s representation. Assume
a representation $\mathcal{F}$ of $K_n$ with
$|\textbf{S}(\mathcal{F})|=n$. Delete all monopolized elements in
$\textbf{S}(\mathcal{F})$ from the representation sets of all
vertices, say the resulting representation $\mathcal{F}'$, and then
take $\mathcal{Q}(\mathcal{F}')$. Now
$|\mathcal{Q}(\mathcal{F}')|\leq n$. Clearly
$\mathcal{Q}(\mathcal{F}')$ is a clique partition of $K_n$
containing no trivial clique. By theorem~\ref{deBr} and
$|\mathcal{Q}(\mathcal{F}')|\leq n$, we know that
$\mathcal{Q}(\mathcal{F}')$ consists of only one clique, or is a
near-pencil or projective plane. If $\mathcal{Q}(\mathcal{F}')$ is a
near-pencil or projective plane, then
$|\mathcal{Q}(\mathcal{F}')|=n$ and thus it is clear that in these
two cases we had never deleted any monopolized element from the
representation set of any vertex when we proceed from $\mathcal{F}$
to $\mathcal{F}'$. Thus in these two cases, the original
representation $\mathcal{F}$ is just
$\mathcal{F}(\mbox{near-pencil})$ or $\mathcal{F}(\mbox{projective
plane})$, note that here and in the following we use these two
terminologies "near-pencil" and "projective plane" to stand for
their corresponding clique partitions, respectively. Clearly these
two representations indeed have their constituting sets pairwise
distinct.

For the remaining case, $\mathcal{Q}(\mathcal{F}')$ consists of only
one clique. Thus in this case we must had deleted $n-1$ monopolized
elements in proceeding from $\mathcal{F}$ to $\mathcal{F}'$. And
clearly all constituting sets of $\mathcal{F}$ has a common element,
say $e_1$, and some $n-1$ constituting sets of $\mathcal{F}$ have
monopolized elements, say $e_2,...,e_{n-1}$, respectively.

Thus in above we have proved that every complete graph $K_n$ with
$n\geq 3$ has intersection number $n$ and has three manners for
forming its minimum representations. Note that the practicability of
the one manner derived from projective plane depends on whether or
not $n=k^2+k+1$ for some $k\geq 2$ and there exists projective plane
of order $k$.
\end{proof}

Then we investigate the minimum antichain representations of
complete graphs.

\begin{thm}\label{anti intersect num K_n}
Let $K_n$ be a complete graph on $n$ vertices, where $n\geq 3$.
\begin{enumerate}
\item $\omega_{a}(K_n) = n$.
\item $K_n$ is not \textit{u.i.a.}
\end{enumerate}
%Every complete graph $K_n$ with $n\geq 3$ has antichain intersection
%number $n$ and has two manners for forming its minimum antichain
%representations.
\end{thm}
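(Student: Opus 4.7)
My plan is to mirror the structure of Theorem \ref{intersect num K_n}. For the lower bound I would invoke the inequality $\omega_a(K_n) \ge \omega_f(K_n)$ from the opening Remark, so $\omega_a(K_n) \ge n$ is immediate from the previous theorem. For the upper bound I would exhibit the antichain representation associated with the near-pencil clique partition of $K_n$. Labelling the vertices $v_0, v_1, \ldots, v_{n-1}$, take cliques $Q_0 = \{v_1, \ldots, v_{n-1}\}$ together with $Q_i = \{v_0, v_i\}$ for $i = 1, \ldots, n-1$; the corresponding sets are $S_{v_0} = \{e_1, \ldots, e_{n-1}\}$ and $S_{v_i} = \{e_0, e_i\}$ for $i \ge 1$. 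These $n$ sets are pairwise distinct and form an antichain: $S_{v_0}$ omits the element $e_0$ that lies in every other set, while the remaining sets are $2$-element sets of equal size and hence pairwise incomparable. Thus $\omega_a(K_n) \le n$, giving equality.

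For the non-uniqueness part, I would observe that the above construction depends on a choice of ``outside'' vertex $v_*$, and any such choice yields a minimum antichain representation. Two such representations arising from distinct outside vertices $v_*$ and $v_{**}$ cannot be related by a permutation of the ground set $X$, because a permutation of $X$ preserves the cardinality of each $S_v$, yet the unique vertex whose set has size $n-1$ is $v_*$ in one representation and $v_{**}$ in the other. This already contradicts u.i.a. For values $n = k^2 + k + 1$ at which a projective plane of order $k$ exists, the projective plane yields a further minimum antichain representation in which every $S_v$ has size $k+1$, reinforcing the conclusion.

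The step I expect to take the most care is convincing myself that no further minimum antichain representations sneak in, and in particular that the ``common element plus monopolized elements'' trick used in Theorem \ref{intersect num K_n} has no antichain analogue. Here I would again pass through the correspondence between multifamily representations and clique partitions: any antichain representation with at most $n$ elements corresponds to a clique partition of size at most $n$, and Theorem \ref{deBr} forces such a partition, once one strips off trivial cliques, to be either the whole $K_n$, a near-pencil, or a projective plane. The whole-$K_n$ case padded by monopolized elements is precisely the representation that fails to be an antichain, since the vertex receiving no private element has a set strictly contained in each of the others; so only the near-pencil and projective-plane representations remain, and the argument above handles both.
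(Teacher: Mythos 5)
Your handling of part 1 is correct and a little cleaner than the paper's: the paper re-runs the delete-monopolized-elements / de Bruijn--Erd\H{o}s argument to establish the lower bound, whereas you simply quote $\omega_a(K_n)\ge\omega_f(K_n)=n$ from the opening Remark together with Theorem~\ref{intersect num K_n}, and your explicit check that the near-pencil sets form an antichain supplies detail the paper only asserts. Your closing classification paragraph (strip monopolized elements, apply Theorem~\ref{deBr}, observe that the single-clique-plus-monopolized-elements representation leaves some vertex with the set $\{c\}$ contained in all the others) is exactly the paper's argument, made more explicit. On part 2 you are in fact \emph{more} careful than the paper: the paper's ``two manners'' are the near-pencil and the projective plane, and since the latter exists only for special $n$, the paper never actually exhibits two permutation-inequivalent antichain representations for general $n$; your apex-switching argument, via the permutation-invariance of the cardinality profile, genuinely does this for $n\ge 4$.

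There is, however, a gap at $n=3$, which the statement includes. There the distinguished set has size $n-1=2$, the same as every other set, so ``the unique vertex whose set has size $n-1$'' does not exist and your obstruction evaporates. Worse, the gap cannot be patched: a minimum antichain representation of $K_3$ consists of three distinct, pairwise intersecting, pairwise incomparable nonempty subsets of a $3$-element set $X$, which rules out singletons and the full set and hence forces the three sets to be the three $2$-subsets $X\setminus\{x\}$; any two such vertex-assignments are related by a permutation of $X$, so $K_3$ \emph{is} u.i.a.\ and part 2 fails for $n=3$. This defect is inherited from the paper, whose own proof is equally silent on why non-uniqueness holds when no projective plane of the appropriate order exists, but you should either exclude $n=3$ or flag it explicitly rather than let the apex argument silently break there.
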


\begin{proof}
Because $\mathcal{F}(\mbox{near-pencil})$ itself is a antichain
representation of $K_n$ making use of $n$ elements, we know that
$\omega_{a}(K_n)\leq n$. Assuming an antichain representation
$\mathcal{F}$ of $K_n$ with $|\textbf{S}(\mathcal{F})|\leq n$.
Delete all monopolized elements in $\textbf{S}(\mathcal{F})$ from
the representation set of all vertices, say the resulting
representation $\mathcal{F}'$, and then take
$\mathcal{Q}(\mathcal{F}')$. Now $|\mathcal{Q}(\mathcal{F}')|\leq n$
and $\mathcal{Q}(\mathcal{F}')$ is a clique partition of $K_n$ with
no trivial clique. By theorem~\ref{deBr}, we know that
$\mathcal{Q}(\mathcal{F}')$ have only one member, or is a
near-pencil, or projective plane. Clearly
$\mathcal{F}(\mbox{near-pencil})$ and $\mathcal{F}(\mbox{projective
plane})$ are both antichain representation. As for the remaining
case that $\mathcal{Q}(\mathcal{F}')$ have only one member, we
cannot recover $\mathcal{F}$ from
$\mathcal{F}(\mathcal{Q}(\mathcal{F}'))=\mathcal{F}'$ by adding
monopolized elements to the members of $\mathcal{F}'$, since
$\mathcal{F}$ is an antichain representation of $K_n$ with
$|\textbf{S}(\mathcal{F})|\leq n$.

Thus we have proved that every complete graph $K_n$ with $n\geq 3$
has antichain intersection number $n$ and has two manners for
forming its minimum antichain representations with the one manner
derived from projective plane being provisory upon the existence of
projective plane of appropriate order.
\end{proof}

%It is clear that $\omega_{\mbox{um}}(K_n)=1$ for all $n$.

As for the investigation of the minimum uniform representations of
complete graphs, we shall refer to the following theorem due to W.
G. Bridges.

\begin{thm}\label{bridges}{\bf (W. G. Bridges\cite{bridges}, 1972)}
Let $\Gamma=(\mathcal{P},\mathcal{L})$ be a finite linear space with
$n\neq 5$ points and $l$ lines. Then $l=n+1$ if and only if $\Gamma$
is a projective plane with one point removed from $\mathcal{P}$ and
every line of $\mathcal{L}$. As for the case of $n=5$, please see
Figure 2.
\end{thm}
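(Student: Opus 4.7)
The plan is to split into the easy forward direction, the harder converse, and discussion of the $n=5$ exception. For the ``if'' direction, given a projective plane $\Pi$ of order $k \geq 2$ with $k^2+k+1$ points and lines (each line of size $k+1$), I would delete a single point $p$ and excise $p$ from the $k+1$ lines through it. The resulting structure has $n = k^2+k$ points and still $n+1 = k^2+k+1$ lines, with $k^2$ long lines of size $k+1$ and $k+1$ short lines of size $k$. Axioms \textbf{(L1)} (short lines have $k \geq 2$ points) and \textbf{(L3)} ($k+1 \leq n-1$ for $k\geq 2$) hold, while \textbf{(L2)} is inherited from $\Pi$.

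For the converse, given a linear space $\Gamma$ on $n$ points with $l = n+1$ lines, the strategy is to identify and fill in one ``missing'' point $p^{*}$ to produce a projective plane. Write $b_k$ for the number of $k$-lines and $t_v$ for the number of $2$-lines through vertex $v$. I would exploit the identities $\sum_k b_k = n+1$, $\sum_k \binom{k}{2} b_k = \binom{n}{2}$, and $\sum_{L \ni v}(|L|-1) = n-1$ at each vertex. The key deletion lemma is: removing $v$ kills exactly the $t_v$ lines of size $2$ through $v$ and leaves a linear space $\Gamma - v$ on $n-1$ points with $l - t_v$ lines (up to trivial degenerate cases handled separately). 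Applying Theorem~\ref{deBr} to $\Gamma - v$ then gives $l - t_v \geq n-1$, so $t_v \leq 2$, and in the boundary case $t_v = 2$ the equality clause of Theorem~\ref{deBr} forces $\Gamma - v$ to be either a near-pencil or a projective plane.

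The main obstacle is promoting these local facts to a global concurrency statement: that the short lines of $\Gamma$ share a single common phantom point $p^{*}$, so that adjoining $p^{*}$ to each short line yields a projective plane of order $k$ with $n = k^2+k$. I would argue by choosing a vertex $v$ achieving $t_v = 2$, using the structure of $\Gamma - v$ to control the lines of $\Gamma$ through $v$, then cross-referencing different deletion choices to rule out the near-pencil alternative in favor of the projective-plane one. The $n = 5$ exception arises precisely because the integrality constraints $b_2 + b_3 = 6$ and $b_2 + 3b_3 = 10$ admit the sporadic solution $b_3 = 2$, $b_2 = 4$, realized for instance by two $3$-lines sharing a common vertex together with the four $2$-lines among the remaining pairs of points; this configuration cannot be a projective-plane deletion since no projective plane has $n+1 = 6$ points (one would need $k^2+k = 5$, which has no integer root). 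For $n \neq 5$ the parameter $k$ with $k^2+k = n$ is either forced to be a unique positive integer or no such $\Gamma$ exists (as one can check directly for $n=3,4$), and the iterated-deletion and concurrency argument closes off the sporadic alternatives.
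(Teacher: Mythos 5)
This statement is imported by the paper as a known result of Bridges (1972); the paper offers no proof of its own, so your attempt can only be judged on its internal merits. Your ``if'' direction is correct and complete: deleting a point of a projective plane of order $k$ and excising it from the $k+1$ lines through it yields $n=k^2+k$ points, $n+1$ lines, and the axioms \textbf{(L1)}--\textbf{(L3)} are easily verified. The converse, however, contains a genuine gap, and it is exactly at the step you yourself flag as ``the main obstacle.'' Your deletion lemma plus Theorem~\ref{deBr} does legitimately give $t_v\leq 2$ for every point $v$ (after disposing of the degenerate case where $\Gamma-v$ has a single line, which forces $\Gamma$ to be a near-pencil with only $n$ lines). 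But the mechanism you propose for extracting global structure --- ``choosing a vertex $v$ achieving $t_v=2$'' and invoking the equality clause of de Bruijn--Erd\H{o}s for $\Gamma-v$ --- cannot carry the argument: a punctured projective plane of order $k\geq 3$ has no $2$-lines at all, so $t_v=0$ for every $v$, $\Gamma-v$ has $n+1>n-1$ lines, and the equality case of Theorem~\ref{deBr} never fires. Thus for precisely the structures you are trying to characterize (and a fortiori for a hypothetical exotic $\Gamma$ without $2$-lines), your local analysis yields no information beyond the inequality, and the ``global concurrency'' of the short lines at a phantom point $p^{*}$ is never established. ``Cross-referencing different deletion choices'' is a direction, not an argument.

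What is actually needed, and what Bridges' proof supplies, is a counting argument on point degrees $r_v$ and line sizes $k_L$ (using $\sum_L k_L(k_L-1)=n(n-1)$, $\sum_L k_L=\sum_v r_v$, and the incidence bound $r_v\geq k_L$ for $v\notin L$) showing that all point degrees equal $k+1$, that exactly $k+1$ lines have size $k$ while the remaining $k^2$ have size $k+1$, and that the $k$-lines are pairwise disjoint --- at which point the phantom point can be adjoined. None of this appears in your proposal. Your treatment of the $n=5$ exception is essentially right (the system $b_2+b_3=6$, $b_2+3b_3=10$ forces $b_3=2$, $b_2=4$, and $k^2+k=5$ has no integer solution), though you should also note explicitly that $b_4\geq 1$ or $b_5\geq 1$ is excluded by the same identities together with \textbf{(L3)}. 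As it stands, the converse direction is a plan rather than a proof, and the plan's central tool would fail on the very configurations the theorem describes.
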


\begin{figure}
\centering
\includegraphics[width=0.6\textwidth]{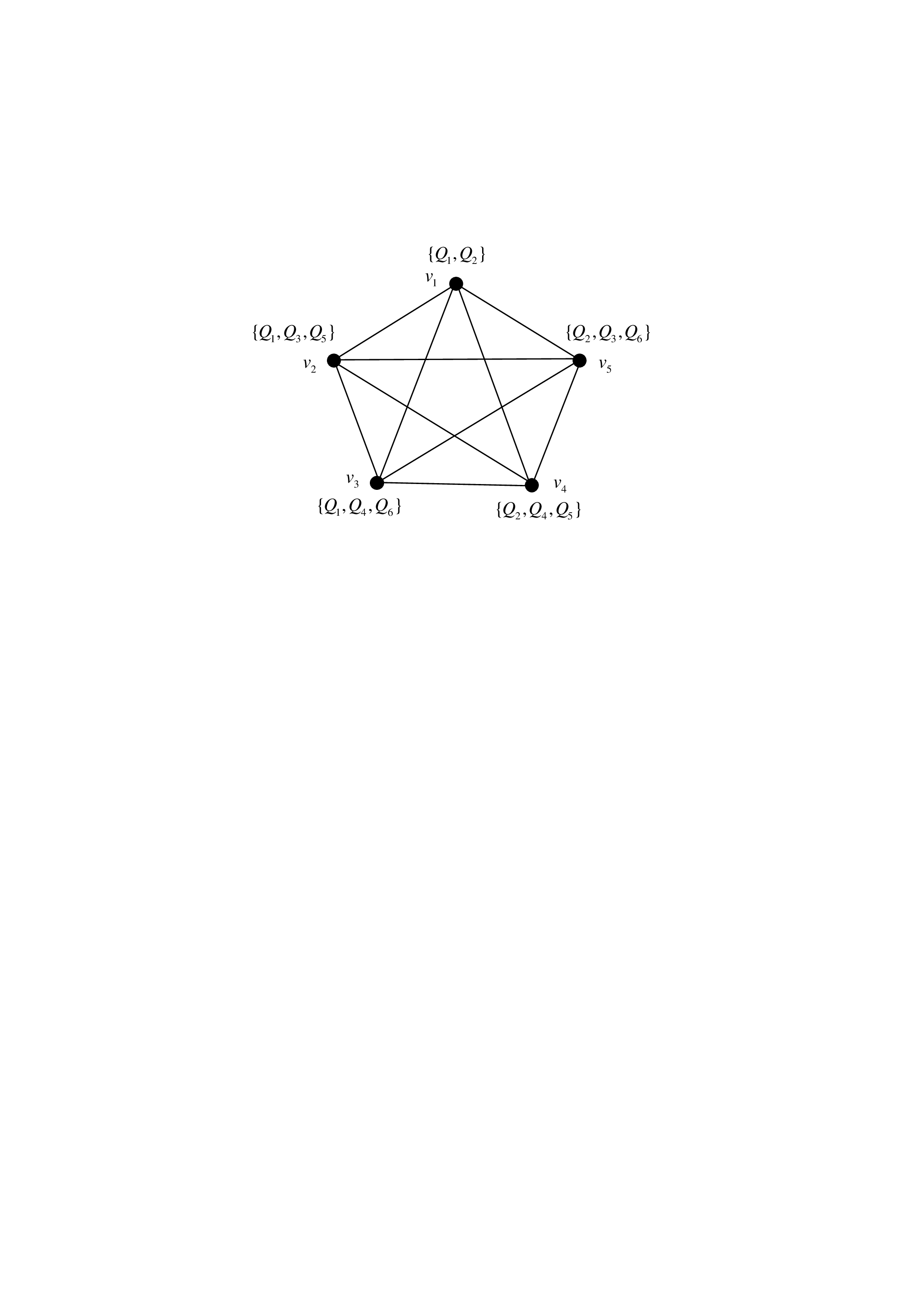}
\caption{Finite linear space with $5$ points and $6$ lines}
\end{figure}

\begin{thm}\label{uni intersect num K_n} We have the following
cases for the uniform representations of complete graphs:
\begin{enumerate}
\item for $n=3$, $K_n$ has uniform intersection number $n$ and
has only one manner to form its minimum uniform representations;
\item for $n\geq 4$, where $n=k^2+k+1$ for some $k\geq 2$ so that
there exists projective plane of order $k$, $K_n$ has uniform
intersection number $n$ and has only one manner to form its minimum
uniform representations;
\item for $n\geq 4$ where $n=k^2+k$ for
some $k\geq 2$ so that there exists projective plane of order $k$,
$K_n$ has uniform intersection number $n+1$ and has two manners to
form its minimum uniform representations; and
\item for $n\geq 4$
where $n\neq k^2+k+1$ and $n\neq k^2+k$ for all $k\geq 2$, $K_n$ has
uniform intersection number $n+1$ and has only one manner to form
its minimum uniform representations.
\end{enumerate}
\end{thm}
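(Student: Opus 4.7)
The plan is to reuse the strip-and-classify template of Theorems~\ref{intersect num K_n} and~\ref{anti intersect num K_n}: given a uniform representation $\mathcal{F}$ of $K_n$, delete its monopolized elements to obtain $\mathcal{F}'$ and analyze the clique partition $\mathcal{Q}(\mathcal{F}')$ through Theorem~\ref{deBr} of de~Bruijn--Erd\"{o}s and Theorem~\ref{bridges} of Bridges. Setting $l=|\mathcal{Q}(\mathcal{F}')|$, $d_v$ for the number of cliques of $\mathcal{Q}(\mathcal{F}')$ containing the vertex $v$, and $m_v$ for the number of monopolized elements attached to $v$, one has $|S_v|=d_v+m_v$ and $|\textbf{S}(\mathcal{F})|=l+m$ where $m=\sum_v m_v$. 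The new feature compared with the family and antichain settings is that uniformity forces $d_v+m_v=t$ to be a constant independent of $v$; in particular, when $m=0$ the linear space associated with $\mathcal{Q}(\mathcal{F}')$ must be point-regular.

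For the upper bounds I would exhibit a uniform representation in each regime: (a) for $n=3$, the three-$K_2$ partition of $K_3$ yields a uniform family with $t=2$ on $3$ elements; (b) for $n=k^2+k+1$ admitting a projective plane of order $k$, the line-pencils of that plane yield a uniform family with $t=k+1$ on $n$ elements; (c) for $n=k^2+k$ admitting a projective plane of order $k$, deletion of one point (the Bridges construction of Theorem~\ref{bridges}) yields a uniform family with $t=k+1$ on $n+1$ elements; (d) for every $n\ge 3$, the assignment $S_v=\{e_0,e_v\}$ with one common element $e_0$ and a vertex-specific monopolized element $e_v$ produces a uniform family with $t=2$ on $n+1$ elements. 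These realize the upper bounds $\omega_u(K_n)\le n$ in cases 1--2 and $\omega_u(K_n)\le n+1$ in cases 3--4.

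For the matching lower bounds and the counting of manners I would start from an arbitrary minimum uniform representation $\mathcal{F}$ and split on $l$, using the baseline $\omega_u(K_n)\ge\omega_a(K_n)=n$ from Theorem~\ref{anti intersect num K_n}. If $l=1$, uniformity forces $m_v=t-1$ constant and distinctness forces $t\ge 2$, hence $m\ge n$ and $|\textbf{S}(\mathcal{F})|\ge n+1$, with equality precisely for construction (d). If $l=n$, Theorem~\ref{deBr} makes $\mathcal{Q}(\mathcal{F}')$ a near-pencil or a projective plane of some order $k$; the projective-plane branch gives construction (b) with $m=0$ and $n=k^2+k+1$; the near-pencil has vertex degrees $n-1$ and $2$, so for $n\ge 4$ it requires $t\ge n-1$ and $m\ge(n-1)(n-3)$, giving $|\textbf{S}(\mathcal{F})|\ge n+(n-1)(n-3)>n+1$, while for $n=3$ the near-pencil \emph{is} construction (a). If $l=n+1$ and $m=0$, then $\mathcal{Q}(\mathcal{F}')$ is a point-regular linear space on $n$ points with $n+1$ lines, which by Theorem~\ref{bridges} is a projective-plane-of-order-$k$ minus a point, forcing $n=k^2+k$ and $\mathcal{F}=$ construction (c); the sporadic $n=5$ example shown in Figure~2 has vertex-degree sequence $(2,3,3,3,3)$ and so fails point-regularity, ruling it out as a competitor. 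Any $l\ge n+2$ immediately gives $|\textbf{S}(\mathcal{F})|\ge n+2$, beyond the target.

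Sorting these subcases by the arithmetic condition on $n$ produces the four-case statement. The main obstacle will be the careful bookkeeping for the near-pencil and the sporadic $n=5$ Bridges linear space: one must verify that no ``hybrid'' obtained by attaching monopolized elements to a non-regular clique partition can improve upon constructions (a)--(d) in its regime, so that the count of minimum uniform representations in each case is exactly as claimed.
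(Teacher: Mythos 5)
Your proposal is correct and follows essentially the same route as the paper: strip the monopolized elements, classify the residual clique partition $\mathcal{Q}(\mathcal{F}')$ via Theorems~\ref{deBr} and~\ref{bridges}, and check which of the resulting configurations (single clique, near-pencil, projective plane, plane minus a point, the sporadic $5$-point space) can be padded back to a uniform family within the element budget. Your explicit bookkeeping $|S_v|=d_v+m_v=t$, the degree computation $(n-1,2,\dots,2)$ ruling out the near-pencil for $n\geq 4$, and the point-irregularity argument disposing of the Figure~2 space make rigorous exactly the steps the paper passes over with ``we cannot recover $\mathcal{F}$,'' so no gap remains.
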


\begin{proof}
Assume an uniform representation $\mathcal{F}$ of $K_n$ with
$|\textbf{S}(\mathcal{F})|\leq n$. Delete all monopolized elements
in $\textbf{S}(\mathcal{F})$ from the representation sets of all
vertices, say the resulting representation $\mathcal{F}'$, and then
take $\mathcal{Q}(\mathcal{F}')$. Now
$|\mathcal{Q}(\mathcal{F}')|\leq n$ and $\mathcal{Q}(\mathcal{F}')$
is a clique partition of $K_n$ with no trivial clique. By
Theorem~\ref{deBr}, we know that $\mathcal{Q}(\mathcal{F}')$ have
only one member, or is a near-pencil, or a projective plane. Clearly
$\mathcal{F}(\mbox{projective plane})$ is an uniform representation
in its own right, while we cannot recover an uniform representation
$\mathcal{F}$, with $|\textbf{S}(\mathcal{F})|\leq n$, of $K_n$ with
$n\geq 3$ from $\mathcal{F}(\mbox{near-pencil})$ by adding
monopolized elements to the members of it except possibly $n=3$. And
for the remaining case, that is, $\mathcal{Q}(\mathcal{F}')$ has
only one clique, we also cannot recover $\mathcal{F}$ from
$\mathcal{F}(\mathcal{Q}(\mathcal{F}'))$.

Thus whenever $n\geq 4$, we have that $\omega_{u}(K_n)=n$ and $K_n$
is u.i.u. if and only if $n=k^2+k+1$ for some $k\geq 2$ and there
exists projective plane of order $k$.

In case that $4\leq n\neq k^2+k+1$ or there exists no projective
plane of order $k$, since we can always form an uniform
representation $\mathcal{F}$ of $K_n$ with
$|\textbf{S}(\mathcal{F})|=n+1$ by first adopting an element common
to the representation set of all vertices and then for the
representation set of each vertex attaching a monopolized element to
it. Thus for this case we have $\omega_{u}(K_n)=n+1$. Now given an
uniform representation $\mathcal{F}$ of $K_n$ with
$|\textbf{S}(\mathcal{F})|\leq n+1$, first we delete all monopolized
elements of $\textbf{S}(\mathcal{F})$ from the representation set of
each vertex resulting in another representation, say $\mathcal{F}'$,
and then take $\mathcal{Q}(\mathcal{F}')$. Now
$|\mathcal{Q}(\mathcal{F}')|\leq n+1$. By theorem~\ref{deBr} and
\ref{bridges}, (note that we have assumed that $n\geq 4$ and there
exists no projective plane of appropriate order) and the fact that
we cannot recover $\mathcal{F}$ from $\mathcal{F}(\mbox{near-pencil
with } n\geq 4 \mbox{ vertices})$ or $\mathcal{F}(\mbox{the clique
partition as in Figure 5})$, we know that
$\mathcal{Q}(\mathcal{F}')$ either consists of only one clique, or
is a projective plane with one vertex deleted. The corresponding
representation of the latter is an uniform representation in its own
right and we can easily recover $\mathcal{F}$ from the corresponding
representation set of the former by returning monopolized element to
each member of $\mathcal{F}$.
%Thus we have proved that for $n=3$, the complete graph $K_n$ has
%uniform intersection number $n$ and has only one manner to form its
%minimum uniform representations; for $n\geq 4$, where $n=k^2+k+1$
%for some $k\geq 2$ so that there exists projective plane of order
%$k$, the complete graph $K_n$ has uniform intersection number $n$
%and has only one manner to form its minimum uniform representations;
%for $n\geq 4$, where $n=k^2+k$ for some $k\geq 2$ so that there
%exists projective plane of order $k$, the complete graph $K_n$ has
%uniform intersection number $n+1$ and has two manners to form its
%minimum uniform representations; and for $n\geq 4$, where $n\neq
%k^2+k+1$ and $n\neq k^2+k$ for all $k\geq 2$, the complete graph
%$K_n$ has uniform intersection number $n+1$ and has only one manner
%to form its minimum uniform representations.
\end{proof}

\section{\bf{Family Representations of Line Graphs}}

The \textit{line graph} of a graph $G$, which we assume to be
finite, undirected and simple in this paper, written as $L(G)$, is
the graph whose vertices are the edges of $G$, with its two vertices
adjacent if and only if the two edges in $G$ corresponding to these
two vertices have a common endpoint in $G$.

For each vertex $v$ of $G$, the set $e_v$ consisting of all edges in
$G$ containing $v$ induces a maximal clique in $L(G)$. This is one
of the only two types of maximal cliques in $L(G)$, while the rest
of maximal cliques is induced by triangles in $G$. Besides, any edge
$ef\in E(L(G))$ with $e=uv$ and $f=vw$ being two edges in $G$ can
only be contained in either a clique induced by $e,f$ possibly
together with some edges in $G$ with $v$ as endpoint or the clique
induced by the triangle $uvw$ in $G$ (if $u$ is adjacent to $w$).
Clearly the set $P=\{e_v:v\in G, d(v)\geq 2\}$ is a clique partition
of $L(G)$ which we will call the \textit{canonical clique partition}
of $L(G)$. Note that each vertex of $L(G)$ is contained in exactly
two cliques in $P$.

Let $G$ be a graph. A \textit{wing} in $G$ is a triangle with the
property that exactly two of its vertices have degree two in $G$,
while a \textit{3-wing} is a wing with the vertex in it having
degree greater than two having degree exactly three. Besides, we
define a \textit{star} in $G$ to be a collection of edges in $G$
which intersect on a common vertex. Note that a star need not
consist of all edges incident with some vertex, but only a
sub-collection of those edges. We will use the notation $S_v^i$ to
indicate a star with $i$ edges, centered at $v$. The \textit{join}
of simple graphs $G$ and $H$, denoted $G\vee H$, is the graph
obtained from the vertex-disjoint union $G+H$ by adding all the
edges $\{xy:x\in V(G),y\in V(H)\}$. We denote the graph by $W_t$,
$t\geq 2$, as in Figure 3.

\begin{figure} \centering
\includegraphics[width=0.6\textwidth]{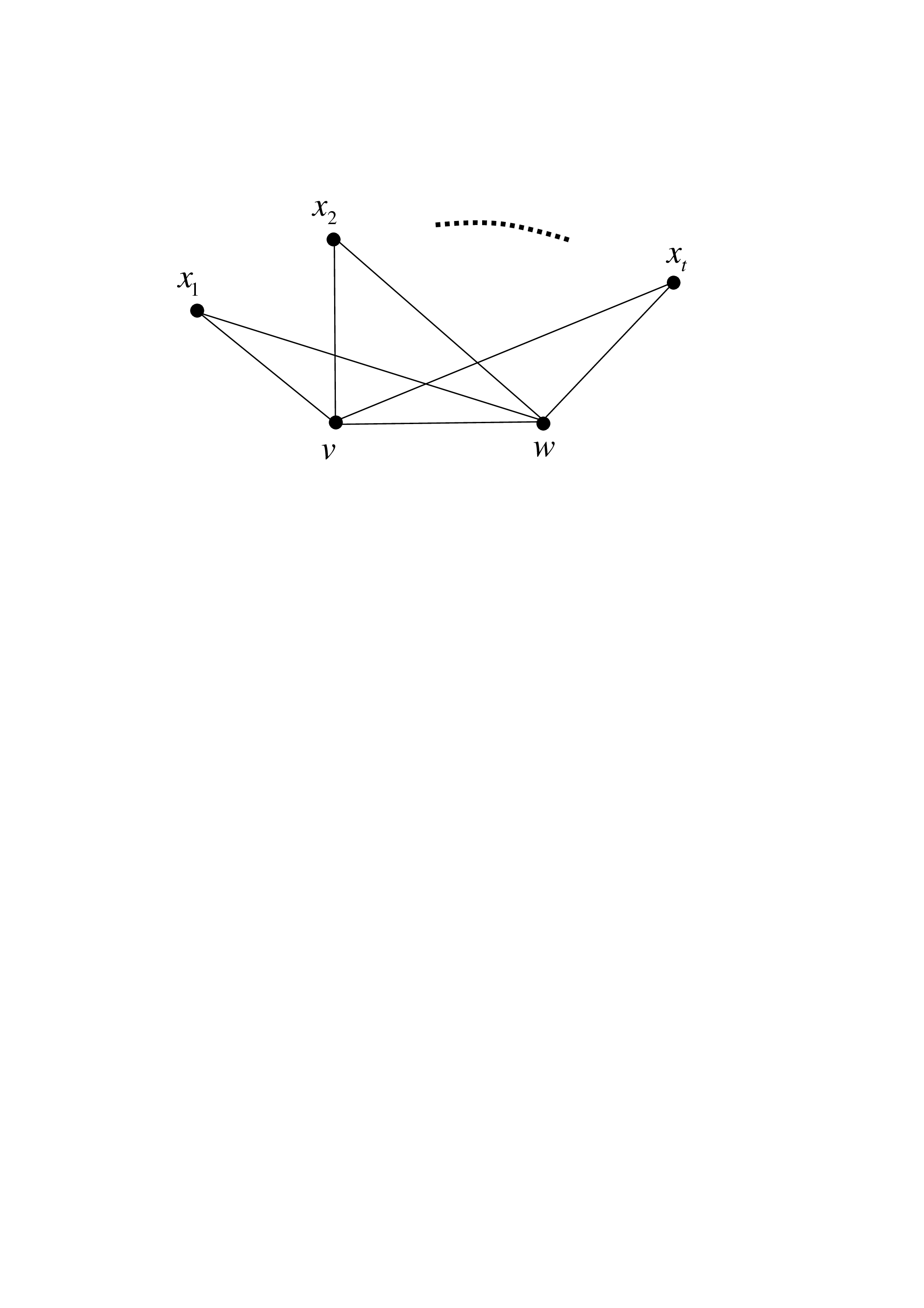} \caption{The graph $W_t$ with $t \geq 2$}
\end{figure}

S. McGuinness and R. Rees proved the following theorem to count the
number of distinct minimum edge clique partitions.

\begin{thm}\label{Mc line thm}{\bf (S. McGuinness and R. Rees\cite{mcguinness2}, 1990)}
Let $G$ be a connected graph, and $G \neq K_3,K_4,(K_2+K_2+K_2)\vee
K_1$ (or $3K_2\vee K_1$ in abbreviation), $W_t$ with $t\geq 2$. Let
$V_2(G)$ denote the set of vertices in $G$ with degree at least two
, and let $w_3$ denote the number of 3-wing in $G$. Then
$cp(L(G))=|V_2(G)|$ and there are exactly $2^{w_3}$ distinct minimum
clique partitions of $L(G)$.
\end{thm}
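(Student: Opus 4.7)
The plan has three stages: first, establish $cp(L(G)) = |V_2(G)|$ by matching the canonical upper bound against a structural lower bound; second, describe a local swap at each 3-wing that produces a new minimum partition; third, prove that every minimum partition arises from the canonical one by a unique choice of such swaps.

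For the upper bound, the canonical clique partition $P = \{e_v : v \in V_2(G)\}$ introduced in the text has $|P| = |V_2(G)|$, giving $cp(L(G)) \leq |V_2(G)|$. For the matching lower bound, I would exploit the structural fact already noted: every maximal clique of $L(G)$ is either a star $e_v$ with $v \in V_2(G)$ or the triple $\{uv, uw, vw\}$ coming from a triangle $uvw$ of $G$. Given an arbitrary clique partition $\mathcal{Q}$ of $L(G)$, restrict $\mathcal{Q}$ to the vertex set of $e_v$, viewed as a copy of $K_{d(v)}$ inside $L(G)$; the restriction is a clique partition of $K_{d(v)}$, and since $d(v) \geq 2$, at least one clique of $\mathcal{Q}$ must meet $e_v$ in two or more vertices. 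A charging argument — assigning each clique $Q \in \mathcal{Q}$ to a vertex of $G$ using its type (star cliques go to their center, triangle cliques to any of the three triangle vertices) — then yields $|\mathcal{Q}| \geq |V_2(G)|$ via a system-of-distinct-representatives argument, using the fact that every $v \in V_2(G)$ must receive at least one clique.

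For the swap, let $uvw$ be a 3-wing with $d(u) = d(v) = 2$, $d(w) = 3$, and $wz$ the third edge at $w$. The canonical three cliques at this location are $e_u = \{uv, uw\}$, $e_v = \{uv, vw\}$, and $e_w = \{uw, vw, wz\}$. They can be replaced by the triangle clique $T = \{uv, uw, vw\}$ together with the two $2$-element cliques $\{uw, wz\}$ and $\{vw, wz\}$: both bundles partition exactly the same five $L(G)$-edges and have the same cardinality $3$. Because the two degree-two vertices of a 3-wing are not shared with any other maximal clique structure outside this local configuration, swaps at distinct 3-wings commute and act independently, yielding $2^{w_3}$ pairwise distinct minimum partitions.

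The main obstacle, and stage three, is the converse: every minimum partition of $L(G)$ differs from $P$ by swaps at a subset of 3-wings and nothing more. The argument is to show that any triangle clique $T_{uvw}$ appearing in a minimum partition $\mathcal{Q}$ must come from a 3-wing; otherwise, undoing $T_{uvw}$ and restoring the canonical stars, with the incident cliques adjusted around $u, v, w$, strictly decreases $|\mathcal{Q}|$, contradicting minimality. The exceptional graphs $K_3$, $K_4$, $3K_2 \vee K_1$, and $W_t$ for $t \geq 2$ are precisely the small cases where this analysis fails: the line graph admits additional extremal clique partitions coming from de Bruijn-Erd\"{o}s-type configurations (near-pencils and projective-plane-like structures of Section 3) that are not captured by 3-wing swaps, so these graphs must be excluded from the statement.
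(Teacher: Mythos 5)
First, note that the paper does not prove Theorem~\ref{Mc line thm} at all: it is quoted as a known result of S.~McGuinness and R.~Rees \cite{mcguinness2}, so there is no in-paper proof to compare yours against. Judged on its own merits, your outline has the right skeleton (canonical partition for the upper bound, independent local swaps at 3-wings, a converse classification), and the swap description in stage two is correct, but both load-bearing steps are asserted rather than proved. For the lower bound, the observation that every $v \in V_2(G)$ ``must receive at least one clique'' does not yield a system of distinct representatives: you need Hall's condition, and it can genuinely fail because a single triangle clique serves three vertices of $V_2(G)$ simultaneously. The graph $K_3$ is the extreme case --- one clique serves all of $V_2(K_3)$ and indeed $cp(L(K_3)) = 1 < 3 = |V_2(K_3)|$. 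Verifying Hall's condition for every connected $G$ outside the exceptional list is essentially the entire content of the McGuinness--Rees theorem, and your sketch does not engage with it.

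The converse (stage three) is in worse shape. The proposed local surgery --- undoing a non-3-wing triangle clique $T_{uvw}$ and restoring canonical stars ``strictly decreases $|\mathcal{Q}|$'' --- is false as a local statement: in $L(K_4)$ the four triangle cliques arising from the triangles of $K_4$ (none of which is a 3-wing, since every vertex of $K_4$ has degree three) form a minimum clique partition of exactly the same size as the canonical one, and $L(3K_2 \vee K_1)$ and $L(W_t)$ behave similarly. These are precisely the excluded graphs, but you give no argument that excluding them makes the surgery succeed everywhere else. Moreover, even granting that every triangle clique in a minimum partition comes from a 3-wing, you must still rule out the star cliques being fragmented (i.e., show each $e_v$ minus its triangle edges is covered by one clique, not several), which requires the tightness of the counting argument you have not established. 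Finally, the exceptions are excluded for different reasons --- $K_3$ violates the formula $cp(L(G)) = |V_2(G)|$ itself, whereas $K_4$, $3K_2 \vee K_1$ and $W_t$ satisfy the formula but admit $2$ or $3$ minimum partitions instead of $2^{w_3} = 1$ --- and your closing sentence conflates these two failure modes.
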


A cursory illustration of the above theorem here would be
advantageous for our further study. Note that the above theorem
wouldn't concern itself with "isomorphism", that is, it would regard
two clique partitions to be distinct if the cliques in two clique
partitions are not derived from the same stars and triangles in $G$.
For illustration, to look up a minimum clique partition of the line
graph of the graph $G$ in Figure 4, we have two "distinct manners",
one by the upper triangle and the inferior two edges in $L(G)$, that
is, by the three stars in $G$ centered at $u,v,w$, whereas the other
by the inferior triangle and the upper two edges in $L(G)$, that is,
by the 3-wing $uvw$ and the two stars $\{vw, vy\}$, $\{vu, vy\}$ in
$G$.

\begin{figure} \centering
\includegraphics[width=0.5\textwidth]{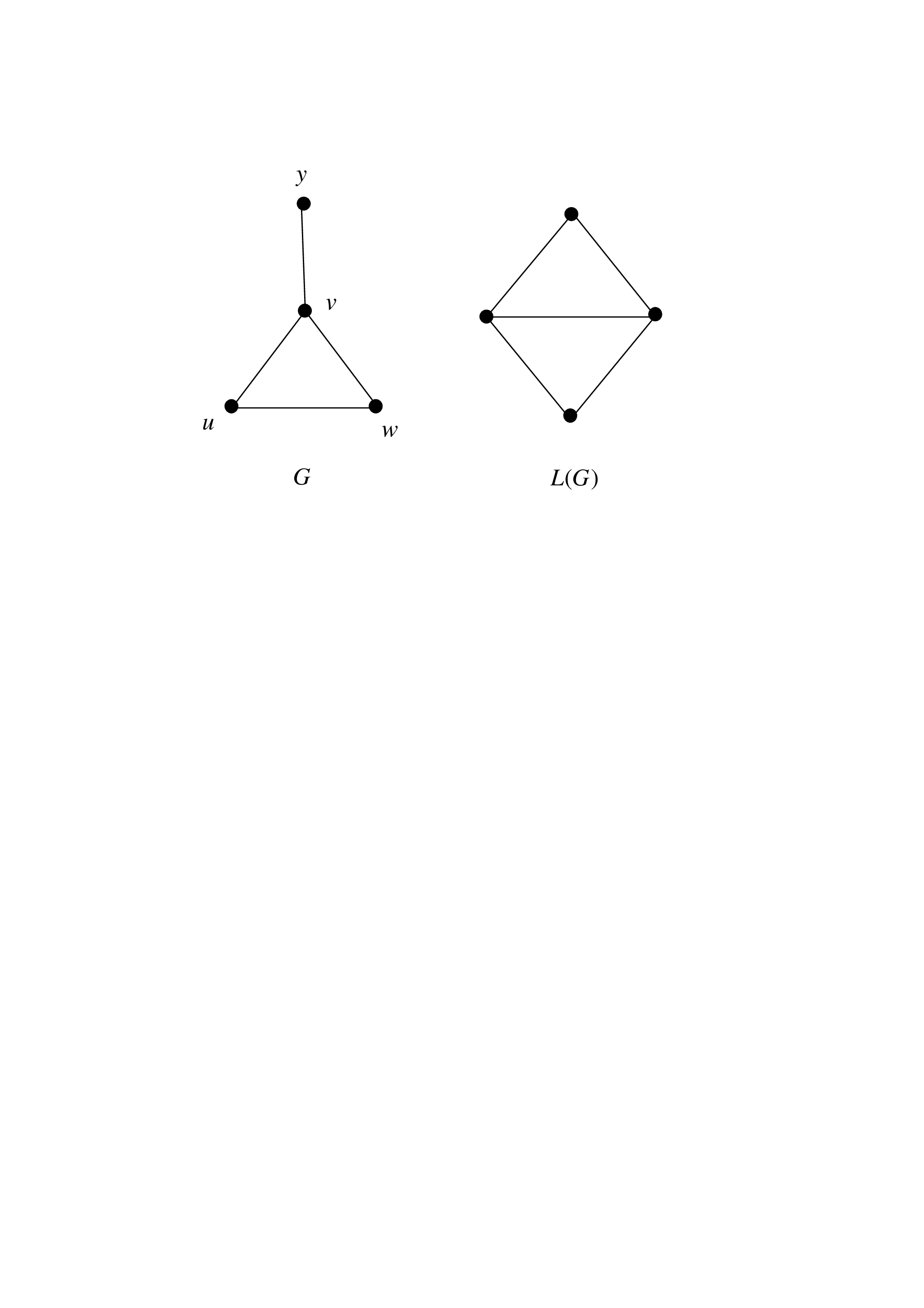} \caption{A graph and its line graph}
\end{figure}

We will follow this criterion when deciding whether or not two
clique partitions are the same. The above theorem clarify the fact
that to attain a minimum clique partition of $L(G)$, where note that
$G$ is the class of graphs aforementioned in the above theorem, no
triangle in $L(G)$ induced by a triangle in $G$ other than 3-wing
can be used. And the adopting in a clique partition of $L(G)$ of any
triangle induced by one 3-wing in $G$ can also yield a minimum
clique partition other than the unique other minimum clique
partition, called the canonical one, which consists of all maximal
cliques of $L(G)$ induced by one maximal star in $G$. Thus each
3-wing in $G$, refer to Figure 4, yield two distinct clique
partitions of $L(G)$, one adopting the upper triangle and the
inferior two edges in the right graph of Figure 4, while the other
adopting the inferior triangle and the upper two edges; and
therefore as the aforementioned by the above theorem $G$ has exactly
$2^{w_3}$ distinct minimum clique partitions.

Now we investigate the intersection number of the line graph $L(G)$
of a connected simple graph $G \neq K_3,K_4,3K_2\vee K_1, \mbox{or }
W_t, t\geq 2$.

\begin{thm}\label{linegraph representation}
Let $G$ be a connected simple graph, and $G \neq K_3,K_4,3K_2\vee
K_1, \mbox{or } W_t, t\geq 2$. In addition, we suppose that $G$ is
not a star. Let $V_2(G)$ denote the set of vertices in $G$ with
degree at least two, and let $w_3$ denote the number of 3-wing in
$G$. And let $u_1^{(i)},...,u_{m_i}^{(i)}$ be all vertices in $G$ of
degree one and adjacent to $v_i$ with $d(v_i)>1$. We suppose that
there are $k$ vertices with its degree more than one in $G$ in total
which are adjacent to some vertex of degree one, i.e., $1\leq i\leq
k$. Then $\omega_{f}(L(G))=|V_2(G)|+\sum_{i=1}^k(m_i-1)$ and there
are exactly $2^{w_3}$ distinct minimum representations of $L(G)$.
\end{thm}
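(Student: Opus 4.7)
The plan is to reduce the family-representation question for $L(G)$ to an edge clique partition question via the monopolized-element device used in Theorems~\ref{intersect num K_n} and~\ref{anti intersect num K_n}, and then combine the McGuinness--Rees classification (Theorem~\ref{Mc line thm}) with a ``pendant-edge'' analysis. The central observation is that two vertices of $L(G)$ (that is, two edges of $G$) receive the same subset in $\mathcal{F}(\mathcal{Q})$ if and only if they lie in exactly the same cliques of $\mathcal{Q}$; for the canonical partition $P=\{e_v:v\in V_2(G)\}$ this happens precisely for the $m_i$ pendant edges $v_iu_j^{(i)}$ at each vertex $v_i$, which share only the clique $e_{v_i}$.

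For the upper bound I would start from $P$, which has $|V_2(G)|$ cliques by Theorem~\ref{Mc line thm}, and append $m_i-1$ fresh monopolized elements at each $v_i$ to distinguish the sets of its $m_i$ pendant edges. This yields a legitimate family representation of $L(G)$ using $|V_2(G)|+\sum_{i=1}^{k}(m_i-1)$ elements, hence the claimed upper bound on $\omega_f(L(G))$.

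For the lower bound, given an arbitrary family representation $\mathcal{F}$ of $L(G)$, I strip the monopolized elements to obtain a multifamily representation $\mathcal{F}'$ and its associated clique partition $\mathcal{Q}=\mathcal{Q}(\mathcal{F}')$. Writing $r$ for the number of monopolized elements discarded, $|\textbf{S}(\mathcal{F})|=|\mathcal{Q}|+r$, and $r$ is at least $\sum_i(m_i-t_i(\mathcal{Q}))$, where $t_i(\mathcal{Q})$ denotes the number of distinct sets $\mathcal{F}'$ assigns to the $m_i$ pendants at $v_i$. The key lemma to prove is
\[
|\mathcal{Q}|+\sum_i\bigl(m_i-t_i(\mathcal{Q})\bigr)\;\ge\;|V_2(G)|+\sum_{i=1}^{k}(m_i-1),
\]
with equality exactly when $\mathcal{Q}$ is one of the $2^{w_3}$ minimum clique partitions of Theorem~\ref{Mc line thm}. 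A pendant edge $v_iu_j^{(i)}$ cannot sit in any triangle-clique of $L(G)$, since the degree-one endpoint $u_j^{(i)}$ has no other neighbor to close a triangle; therefore every clique of $\mathcal{Q}$ meeting the pendant set $\{v_iu_j^{(i)}\}_j$ is a star centered at $v_i$, and restricting these stars to the pendants gives a clique partition of $K_{m_i}$. By Theorem~\ref{deBr} this restricted partition consists either of one big sub-clique or of at least $m_i$ sub-cliques, and a case analysis---charging each star to its center and each triangle of $\mathcal{Q}$ to the three star-savings it may afford---shows that the local contribution at $v_i$ (stars at $v_i$ plus duplicates $m_i-t_i$) is at least $m_i$ when $m_i\ge 1$ and at least $1$ when $m_i=0$, with equality in the case $m_i\ge 1$ forcing $e_{v_i}\in\mathcal{Q}$.

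The main obstacle is the triangle bookkeeping: a triangle-clique in $\mathcal{Q}$ can simultaneously eliminate the stars at three distinct vertices (this is exactly the 3-wing swap underlying Theorem~\ref{Mc line thm}), and I must verify that the associated trade-off balances only when the triangle arises from a 3-wing and, crucially, never touches any $v_i$ with $m_i\ge 2$. This last point is automatic, because in a 3-wing the two degree-two vertices have no pendant neighbor and the unique degree-three vertex has at most one. Consequently equality in the key lemma forces $\mathcal{Q}$ to be a minimum clique partition in which every pendant set is bundled inside a single star, and by Theorem~\ref{Mc line thm} there are exactly $2^{w_3}$ such partitions. Each of them produces one minimum family representation (the number $\sum_{i=1}^k(m_i-1)$ of appended monopolized elements is the same for all of them), and since the underlying clique partitions are pairwise distinct, so are the resulting family representations, yielding the claimed count of $2^{w_3}$.
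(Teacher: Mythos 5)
Your proposal is correct and follows essentially the same route as the paper's own argument: reduce family representations to clique partitions plus monopolized elements, observe that duplicate sets arise only among pendant edges at a common vertex (and not from wings), rule out the near-pencil and projective-plane alternatives for the pendant clique $K_{m_i}$ via Theorem~\ref{deBr}, and import the $2^{w_3}$ count from Theorem~\ref{Mc line thm}. If anything, your explicit global inequality $|\mathcal{Q}|+\sum_i(m_i-t_i)\ge |V_2(G)|+\sum_i(m_i-1)$ makes the bookkeeping more transparent than the paper's local-case sketch.
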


\begin{proof}
First we consider the following question: When do a minimum clique
partition, say $\mathcal{Q}$, of $L(G)$ has two vertices obtaining
the same representation set after we take
$\mathcal{F}(\mathcal{Q})$? Clearly if such two vertices, say
$e_1,e_2$, exist, then their two corresponding edges in $G$, say
$vu_1,vu_2$, inetrsect and either $d(u_1)=d(u_2)=1$ or $vu_1u_2$ is
a wing in $G$ with $d(u_1)=d(u_2)=2$.

\begin{figure} \centering
\includegraphics[width=0.5\textwidth]{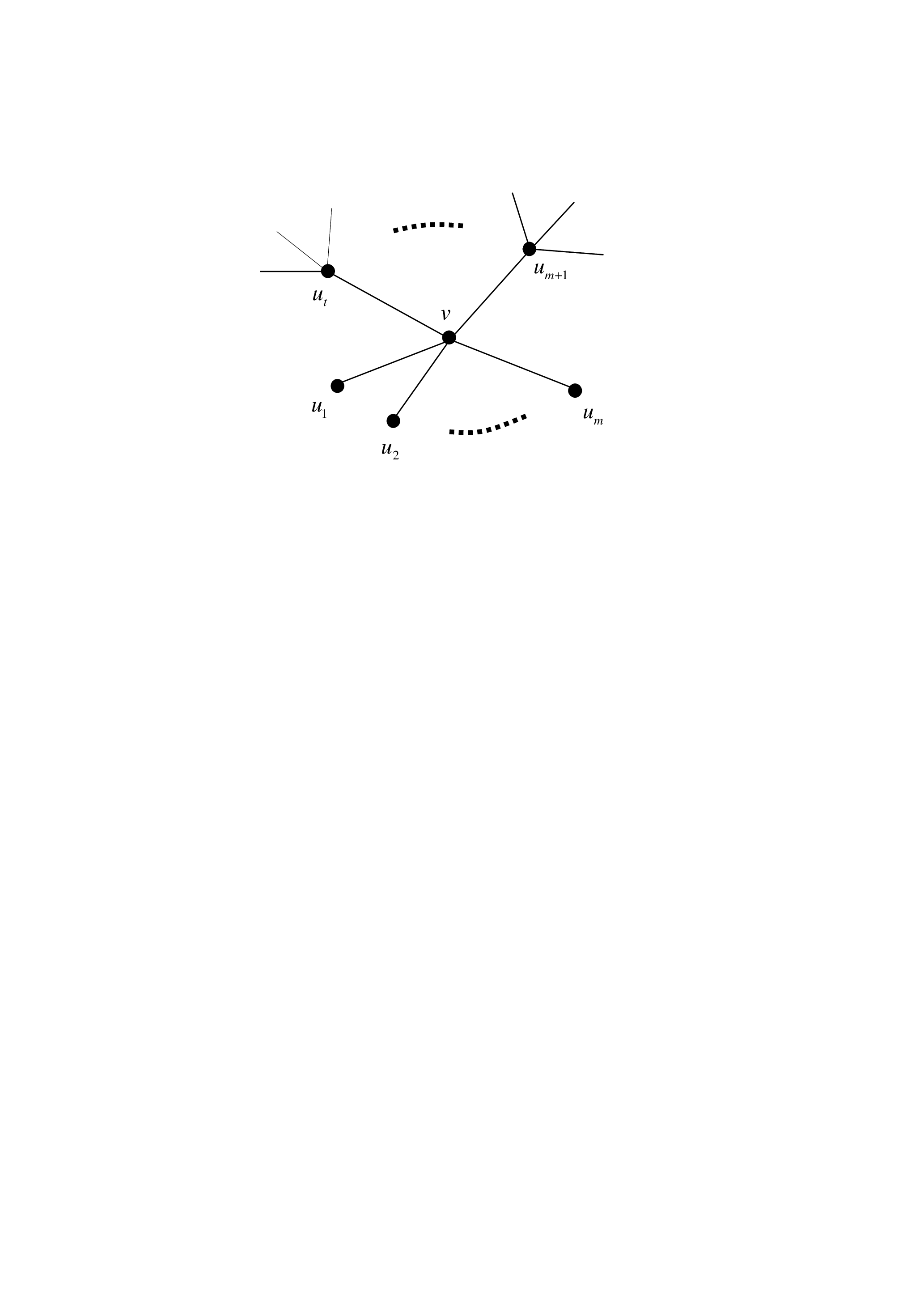} \caption{Example}
\end{figure}

For the former case see Figure 5, where for the sake of generality
we suppose that $u_1,...,u_m$ are vertices in $G$ with degree one
and $u_{m+1},...,u_t$ with degree at least 2. Immediately after we
ask the question whether or not we can represent the complete
subgraph $K_m$ in $L(G)$, refer to Figure 5, with vertex set
$\{vu_1,...,vu_m\}$ by exactly $m$ elements in some minimum
representation of $L(G)$. (Note that it is impossible to represent
it by $m-1$ elements.) Assuming that we can, then this $K_m$'s
representation can correspond to three types of clique partitions,
say the corresponding clique partition being $\mathcal{Q}$, that is,
near-pencil, projective plane, or $K_m$ together with $m-1$ trivial
cliques. Note that projective plane and near-pencil have a common
property, that is, any two lines intersect on a common point , or
paraphrased into terms of clique partition, any two cliques
intersect on a common vertex, and recall that in the method by which
we construct a correspondence between multifamily representation and
clique partition, an element in multifamily representation
correspond to a clique in clique partition. Thus for the former two
cases, to make $vu_{m+1},...,vu_t$ be adjacent to $vu_1,...,vu_m$,
we shouldn't rely on more than one element in
$\bigcup_{i=1}^mS_{\mathcal{Q}}(vu_i)$, since for any two elements,
the vertex on which the two cliques respectively corresponding to
them intersect has its representation set comprising them. Nor
should we use one. (Unless $G$ itself is a star, that is, $t=m$.)
But for the third case we can use the element in
$\bigcup_{i=1}^mS_{\mathcal{Q}}(vu_i)$ corresponding to the clique
$K_m$ (and note that this is the unique approach if we would like
not to use new elements).

\begin{figure} \centering
\includegraphics[width=0.3\textwidth]{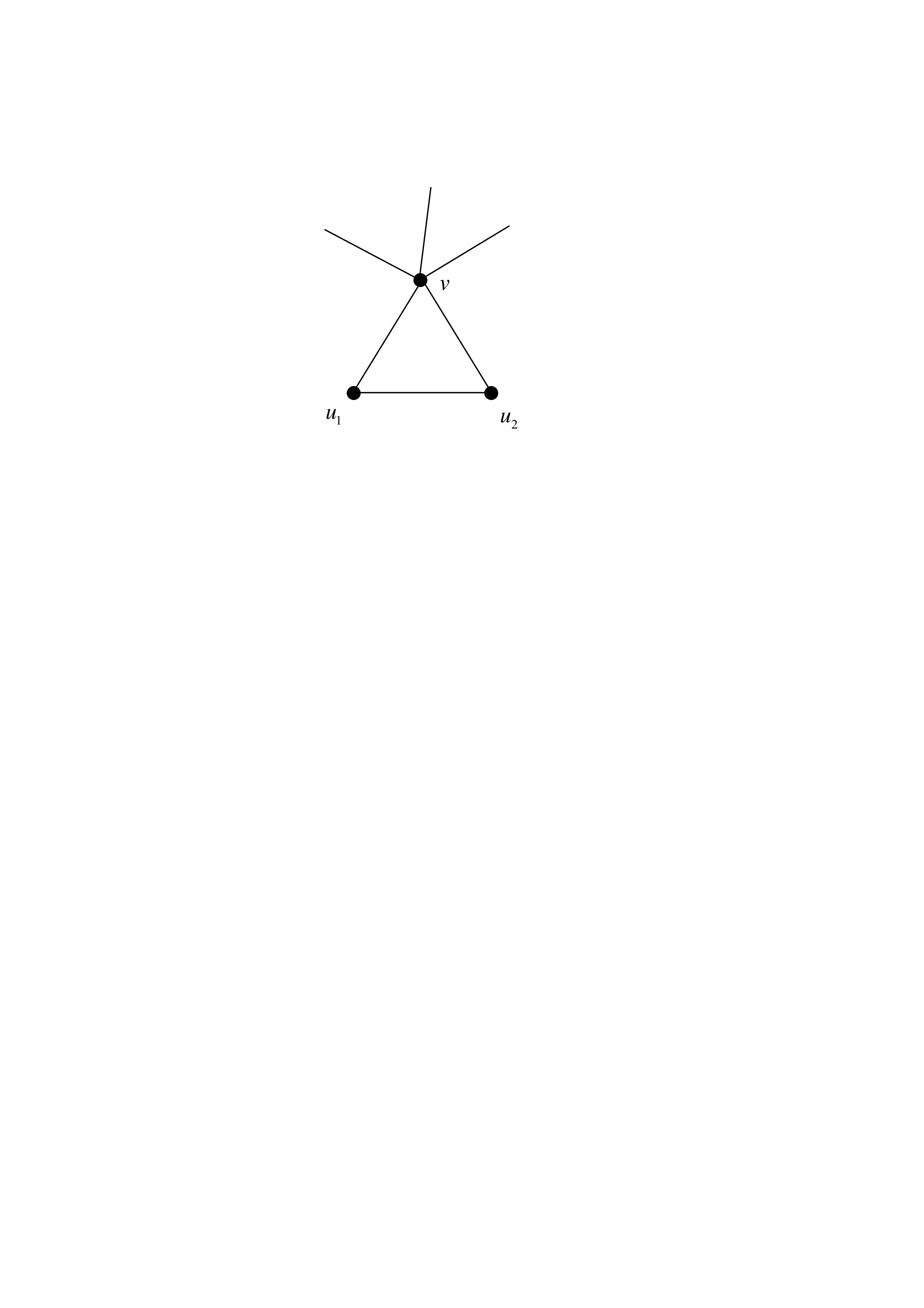} \caption{Example}
\end{figure}

On the other hand, for the case that $vu_1u_2$ is a wing in $G$ with
$d(u_1)=d(u_2)=2$, refer to Figure 6. In this case, whether or not
$vu_1u_2$ is a 3-wing in $G$, that is, whether or not the adopting
of the triangle in $L(G)$ induced by the triangle $vu_1u_2$ in $G$
can occur in one minimum clique partition of $L(G)$, we can't have
$S_{\mathcal{Q}}(vu_1)=S_{\mathcal{Q}}(vu_2)$.
\end{proof}

The case that $G=K_3$ or a star is not hard to see. For $G=3K_2\vee
K_1$, S. McGuinness and R. Rees \cite{mcguinness2} have shown that
$L(G)$ admits exactly three distinct minimum clique partitions, and
with a little direct inspection we see that these three partitions
correspond to three distinct minimum (antichain) representations
respectively. (As a matter of fact, two of the three are
isomorphic.)

As for $G=K_4$, it is easily verified that there are exactly two
distinct but in fact isomorphic clique partitions, one by all the
cliques in $L(G)$ induced by some maximal star in $G$, while the
other by all the triangles in $L(G)$ induced by some triangle in
$G$, and with a little direct inspection we see that these two
partitions correspond to two distinct minimum (antichain)
representations respectively.

As for $G=W_t,t\geq 2$, S. McGuinness and R. Rees \cite{mcguinness2}
have shown that $L(G)$ has exactly two distinct minimum clique
partitions, and with a little direct inspection we see that these
two partitions correspond to two distinct minimum (antichain)
representations respectively.

\section{\bf{Antichain Representations of Line Graphs}}

Next we consider the antichain intersection number of the line graph
$L(G)$, where $G$ is connected simple and $\neq K_3,K_4,3K_2\vee
K_1, \mbox{or } W_t, t\geq 2$.

\begin{thm}\label{linegraph anti representation}
Let $G$ be a connected simple graph, and $G \neq K_3,K_4,3K_2\vee
K_1, \mbox{or } W_t, t\geq 2$. In addition, we suppose that $G$ is
not a star, and is not a graph as in Figure 11, 12, 13, 14. Let
$V_2(G)$ denote the set of vertices in $G$ with degree at least two,
and let $w_3$ denote the number of 3-wing in $G$. And let
$u_1^{(i)},...,u_{m_i}^{(i)}$ be all vertices in $G$ of degree one
and adjacent to $v_i$ with $d(v_i)>1$. We suppose that there are $k$
vertices with its degree more than one in $G$ in total which are
adjacent to some vertex of degree one, i.e., $1\leq i\leq k$. And we
suppose that there are altogether $k'$ such numbers $i$ in
$\{1,...,k\}$ so that $t_i=m_i+1$, and that among the $k'$ numbers
there are $k''$ such numbers $i$ so that there exists projective
plane with $t_i$ vertices. Then when regarding $L(G)$ as a
multigraph, $\omega_{a}(L(G))=|V_2(G)|+\sum_{i=1}^km_i$ and there
are exactly $3^{k'-k''}4^{k''}$ distinct minimum antichain
representations of $L(G)$.
\end{thm}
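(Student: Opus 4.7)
The strategy mirrors the proofs of Theorems~\ref{anti intersect num K_n} and~\ref{linegraph representation}: given any minimum antichain representation $\mathcal{F}$, strip off monopolized elements to obtain the multifamily representation $\mathcal{F}'$ and the associated edge clique partition $\mathcal{Q}(\mathcal{F}')$ of $L(G)$, then analyze the local structure at each $v_i \in V_2(G)$ with leaf neighbors. Theorem~\ref{deBr} (de Bruijn--Erd\"{o}s) applied to the $K_{t_i}$ subclique spanned by the $t_i$ edges incident to $v_i$, together with Theorem~\ref{Mc line thm} bounding $cp(L(G))=|V_2(G)|$, controls the possible local configurations.

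For the upper bound, take the canonical clique partition $P=\{e_v : v\in V_2(G)\}$ (yielding $|V_2(G)|$ elements) and attach to each leaf edge $v_i u_j^{(i)}$ a distinct monopolized element. The resulting family uses $|V_2(G)|+\sum_i m_i$ elements and is antichain because leaves at the same $v_i$ are distinguished by their monopolized elements, and a monopolized element of a leaf cannot lie in the set of any non-leaf edge (preventing containment in either direction). For the lower bound the element budget forces the local partition at each $K_{t_i}$ either to coincide with $e_{v_i}$ plus monopolized singletons, or to adopt a near-pencil or projective-plane configuration on $K_{t_i}$. A key observation that removes the $2^{w_3}$ factor of Theorem~\ref{linegraph representation} is that the triangle-based alternative produced by a 3-wing $uvw$ (with $d(u)=d(v)=2$) places a singleton on the outside edge $uv$ that is contained in every set incident to the high-degree vertex $w$ after canonical completion, so no such 3-wing alternative is antichain.

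The count then splits per $v_i$ with $m_i\geq 1$. If $t_i>m_i+1$, neither near-pencil nor projective plane on $K_{t_i}$ fits within the $m_i+1$-clique local budget, so the unique admissible option is the canonical $K_{t_i}$-clique together with $m_i$ monopolized singletons on the leaves. If $t_i=m_i+1$, three admissible local options arise generically: (a) canonical plus monopolized; (b) the near-pencil on $K_{t_i}$ whose apex is the unique non-leaf edge $v_i w$ (so the big clique consists of the $m_i$ leaves); and (c) the near-pencil on $K_{t_i}$ whose apex is a leaf edge, where the $m_i$ leaf-apex choices are identified by leaf-permutation symmetry at $v_i$. If in addition $t_i=p^2+p+1$ and a projective plane of order $p\geq 2$ exists, the projective-plane configuration gives a fourth option. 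Multiplying the local counts over the $k$ vertices with leaf neighbors yields $3^{k'-k''}\,4^{k''}$.

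The main obstacle is the careful case analysis at each $v_i$ with $t_i=m_i+1$: showing that options (b) and (c) are distinct from (a) and from each other, that all leaf-apex near-pencils in (c) collapse into a single class via the leaf symmetry, and that the non-leaf-apex variant (b) stands separately. Degenerate configurations in which this analysis breaks down---for example a near-pencil on $K_3$ where (b) and (c) coincide as the same set of three edges, or graphs with additional global automorphisms that merge further options---are precisely the exceptional graphs of Figures~11--14 excluded from the hypothesis.
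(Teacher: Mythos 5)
Your overall strategy is indeed the paper's: strip monopolized elements, pass to the clique partition $\mathcal{Q}(\mathcal{F}')$, and classify the local configuration at each $v_i$ with leaf neighbours via Theorems~\ref{deBr} and~\ref{bridges}. Your four local options for $t_i=m_i+1$ (canonical star clique plus monopolized singletons, near-pencil on $K_{t_i}$ with apex the non-leaf edge, near-pencil with a leaf apex, and the projective plane) are exactly the four methods of Lemma~\ref{linegraph anti representation lem}, merely described as partitions of $K_{t_i}$ rather than as partitions of $K_{m_i}$ plus a rule for attaching the non-leaf edge; your observation that 3-wing alternatives violate the antichain condition without extra elements also matches the paper.

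The gap is in the case $t_i>m_i+1$ and, correspondingly, in your reading of the excluded Figures~11--14. You dismiss all alternatives at such a $v_i$ on the grounds that no near-pencil or projective plane on $K_{t_i}$ fits the local element budget, but the dangerous competitor is not a clique partition of $K_{t_i}$ by cliques inside the star $e_{v_i}$: it is the ``right method of Figure~9'', which reuses the $m_i$ monopolized leaf elements to attach one non-leaf edge $v_iu$ to all the leaf edges and then covers the adjacencies of $v_iu$ to the remaining non-leaf edges at $v_i$ by triangle cliques of $G$. This spends the same number of elements inside the star and is antichain; it fails to be minimum only because the exported triangle cliques must be absorbed by the stars at the other triangle vertices, and the second half of the paper's proof is devoted to showing that this absorption succeeds exactly for the graphs of Figures~11--14 --- which consequently admit \emph{additional} minimum antichain representations beyond $3^{k'-k''}4^{k''}$, as the remarks following the proof spell out. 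Your explanation of the exclusions (degenerate symmetry merging options (b) and (c)) is therefore not what they are for; the $K_3$ coincidence you note when $m_i=2$, $t_i=3$ is a separate degeneracy that the listed exclusions do not address. Without an argument that these triangle-assisted configurations are never minimum outside Figures~11--14, neither the claim that $\omega_a(L(G))$ is attained only canonically at vertices with $t_i>m_i+1$ nor the exactness of the count is established.
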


\begin{proof}
First, we consider the question that when do a minimum clique
partition, say $\mathcal{Q}$, of $L(G)$ has two vertices the two
corresponding representation sets for which after we take
$\mathcal{F}(\mathcal{Q})$ would have one contained in the other.
Clearly, the two edges in $G$, say $e_1,e_2$, corresponding to such
two vertices must intersect, say $e_1=vu_1,e_2=vu_2$, and one of
$u_1,u_2$, say $u_1$ throughout the rest of this paper, has no
neighbor other than $v,u_2$.

We first consider exclusively the case that $vu_1u_2$ form a
triangle in $G$. Now $d(u_1)=2$. If only we have never made use of
the clique in $L(G)$ induced by the triangle $vu_1u_2$ in $G$ in a
clique partition, say $\mathcal{Q}$, of $L(G)$, we utterly needn't
to worry about the inclusion relation between the two representation
sets $S_{\mathcal{Q}}(e_1),S_{\mathcal{Q}}(e_2)$. Thus what we need
to consider is mere the case that there exists a minimum clique
partition of $L(G)$ making use of the triangle in $L(G)$ induced by
the triangle $vu_1u_2$ in $G$, i.e., that the triangle $vu_1u_2$ is
a 3-wing. Recall that we have supposed that $d(u_1)=2$, and thus
exactly one of $v,u_2$ has degree two and the other has degree
three. In case that $d(v)=2$, making use of the triangle $vu_1u_2$
in a minimum clique partition, say $\mathcal{Q}$, will make
$S_{\mathcal{Q}}(e_1)$ be contained in $S_{\mathcal{Q}}(e_2)$. Thus
in this case the representation derived from the minimum clique
partition of $L(G)$ making no use of the triangle $vu_1u_2$, i.e.,
the canonical one, is the unique approach to form an minimum
antichain representation of $L(G)$. In case that $d(u_2)=2$, whether
or not we make use of the triangle $vu_1u_2$ in a minimum clique
partition, say $\mathcal{Q}$, of $L(G)$, there can't be inclusion
relation between $S_{\mathcal{Q}}(e_1),S_{\mathcal{Q}}(e_2)$. But if
we make use of the triangle $vu_1u_2$, then
$S_{\mathcal{Q}}(u_1u_2)$ will be contained in both
$S_{\mathcal{Q}}(e_1)$ and $S_{\mathcal{Q}}(e_2)$. Thus in this case
we have the same conclusions as the former one.

Now what remained is the case that $u_1$ is not adjacent to $u_2$.
For this case, we can without loss of generality assume that
$d(u_1)=1$ while leave $d(u_2)$ unappointed. See Figure 5, where for
the sake of generality we suppose that $u_1,...,u_m$ are vertices in
$G$ with degree one and $u_{m+1},...,u_t$ with degree at least two.
Immediately after we look for a minimum antichain representation of
$L(G)$ in which the complete subgraph $K_m$ with vertex set
$vu_1,...,vu_m$ is represented using exactly $m$ elements. (Note
that it is impossible to represent it by $m-1$ elements.) Assuming
that we can, then this $K_m$'s representation can only correspond to
two types of clique partitions, say the corresponding clique
partition being $\mathcal{Q}$, that is, near-pencil or projective
plane. (When $m=1$, we can represent $K_m$ by $m$ elements with
respect to antichain. But in this case we can't make $u_1$ be
adjacent to $u_{m+1},...,u_t$ by the single element in the
representation set of $u_1$ so that the representation set of $u_1$
wouldn't be contained in the representation sets of
$u_{m+1},...,u_t$, unless $t=1$, that is, $G=K_2$.) Now to make
$vu_{m+1},...,vu_t$ be adjacent to $vu_1,...,vu_m$, we can't use
more than one element in $\bigcup_{i=1}^mS_{\mathcal{Q}}(vu_i)$ for
securing the representation sets of any two vertices from
overlapping on more than one element, neither can we use one (unless
$G$ itself is a star, that is, $t=m$).

Thus we should yield by one step looking for a minimum antichain
representation of $L(G)$ in which the complete subgraph $K_m$ with
vertex set $\{vu_1,...,vu_m\}$ is represented by exactly $m+1$
elements. Assuming such a minimum antichain representation, then by
theorem~\ref{deBr} and \ref{bridges} this $K_m$'s representation can
only correspond to five types of clique partitions, say the
corresponding clique partition being $\mathcal{Q}$, that is,
near-pencil together with one trivial clique attached on it,
projective plane together with one trivial clique attached on it,
one $K_m$ together with $m$ trivial cliques attached on it, one as
in Figure 2, or projective plane with one vertex deleted.

For the first case, to make $vu_{m+1},...,vu_t$ be adjacent to
$vu_{1},...,vu_m$, we can't use more than one element in
$\bigcup_{i=1}^mS_{\mathcal{Q}}(vu_i)$ different from the
monopolized one for securing the representation sets of any two
vertices from overlapping on more than one element. Since we have
only one monopolized element in
$\bigcup_{i=1}^mS_{\mathcal{Q}}(vu_i)$, thus we must try to use one
non-monopolized element in $\bigcup_{i=1}^mS_{\mathcal{Q}}(vu_i)$ to
make $m-1$ vertices of the $K_m$ be adjacent to $vu_{m+1},...,vu_t$
(unless $G$ is a star, i.e., $t=m$), and then use the monopolized
element on the vertex of the $K_m$ other than the aforementioned
$m-1$ vertices to make this vertex be adjacent to
$vu_{m+1},...,vu_t$. Clearly we have only one approach to do so,
that is, first take the  element in
$\bigcup_{i=1}^mS_{\mathcal{Q}}(vu_i)$ that correspond to the clique
$K_{m-1}$ in $\mathcal{Q}$ to make all vertices on this  $K_{m-1}$
be adjacent to $vu_{m+1},...,vu_t$, and then use the monopolized
element on the vertex not on this $K_{m-1}$ to make this vertex be
adjacent to $vu_{m+1},...,vu_t$. But when $t>m+1$, using this method
will make $|S(u_{m+1})\cap S(u_{m+2})|\geq 2$. Thus, provided that
$G$ is not a star, this method can be carried out only if $t=m+1$.

As for the second case, i.e. projective plane together with one
trivial clique attached on it, similarly we must try to use one
non-monopolized element in $\bigcup_{i=1}^mS_{\mathcal{Q}}(vu_i)$ to
make $m-1$ vertices of the $K_m$ be adjacent to $vu_{m+1},...,vu_t$
(unless $G$ is a star, i.e., $t=m$). But we know that in a
projective plane of order $k$ each clique contain $k+1$ vertices,
whereas there are $k^2+k+1$ vertices in total where $k\geq 2$, and
thus each clique in a projective plane has
$$(k^2+k+1)-(k+1)=k^2\geq 4$$ vertices not on it. Thus in this case we have failed.

For the third case, i.e., one clique $K_m$ together with $m$ trivial
cliques attached on it, for the sake not to make two representation
sets overlap on more than one element, we can only use the element
in $\bigcup_{i=1}^mS_{\mathcal{Q}}(vu_i)$ corresponding to the
clique $K_m$ in $\mathcal{Q}$ or all monopolized elements in
$\bigcup_{i=1}^mS_{\mathcal{Q}}(vu_i)$ to make $vu_{m+1},...,vu_t$
be adjacent to $vu_1,...,vu_m$. But when $t>m+1$ and there is one
vertex, say $u_{m+1}$, in $\{u_{m+1},...,u_t\}$ which is not
adjacent to any other vertex in $\{u_{m+1},...,u_t\}$, then for the
sake that we should make $vu_{m+1}$ be adjacent to
$vu_{m+2},...,vu_t$, we can only use the element in
$\bigcup_{i=1}^mS_{\mathcal{Q}}(vu_i)$ corresponding to this $K_m$
for $vu_{m+1}$ to be adjacent to $vu_1,...,vu_m$. (If we use the
monopolized elements corresponding to all trivial cliques in
$\mathcal{Q}$ for $vu_{m+1}$ to be adjacent to $vu_1,...,vu_m$, then
since there is no triangle in $G$ which contains $v$ and $u_{m+1}$
by  our supposition before, so in any clique partition of $L(G)$ we
can only cover the edge $\{vu_{m+1},vu_{m+2}\}$ by a clique induced
by some star in $G$ centered at $v$. Thus we can use neither the
element in $\bigcup_{i=1}^mS_{\mathcal{Q}}(vu_i)$ corresponding to
$K_m$ nor all monopolized elements in
$\bigcup_{i=1}^mS_{\mathcal{Q}}(vu_i)$ for $vu_{m+2}$, or otherwise
either we can't make  $vu_{m+2}$ be adjacent to $vu_{m+1}$ or we
will make the representation sets of $vu_{m+1},vu_{m+2}$ overlap on
more than one element.) Thus in this case, when $t>m+1$ we have only
one method to make a vertex belonging to $vu_{m+1},...,vu_t$ but not
adjacent to any member of it be   adjacent to $vu_1,...,vu_m$ using
elements in $\bigcup_{i=1}^mS_{\mathcal{Q}}(vu_i)$, while when
$t=m+1$ we have two methods to make $vu_{m+1}$ be adjacent to
$vu_1,...,vu_m$ using elements in
$\bigcup_{i=1}^mS_{\mathcal{Q}}(vu_i)$.

As for the forth case, i.e. one as in Figure 2, for securing the
representation sets of any two vertices from overlapping on more
than one element, we need one pair of vertex-disjoint cliques in the
clique partition as in Figure 2, and the unique two vertex-disjoint
pairs of cliques, refer to Figure 2, are $\{Q_3,Q_4\}$ and
$\{Q_5,Q_6\}$. If we use $Q_3,Q_4$ to make $vu_{m+1},...,vu_t$ be
adjacent to $v_2,v_3,v_4,v_5$, then to make $v_1$ be  adjacent to
$vu_{m+1},...,vu_t$ we can use neither 1 nor 2 for the sake of two
representation sets overlapping on more than one element. Similarly
for the use of $Q_5,Q_6$. Thus in this case we have failed.

For the fifth case, i.e. projective plane, say of order $k\geq 2$,
with one vertex, say $x$, deleted, we know that a clique in this
clique partition has at most $k+1$ vertices, whereas there are
$k^2+k$ vertices in total. Thus in this case each  clique has at
least $$(k^2+k)-(k+1)=k^2-1\geq 3$$ vertices not on it. Thus in
order that $vu_{m+1},...,vu_t$ be adjacent to $vu_1,...,vu_m$, we
need more than one element from $\mathcal{F}(\mathcal{Q})$. Recall
that a projective plane with $k^2+k+1$ points for some $k\geq 2$ has
point and line regularity $k+1$. Thus deleting one vertex from a
projective plane of order $k\geq 2$ leaves a clique partition
consisting of $k+1$ cliques of cardinality $k$ and $k^2$ cliques of
cardinality $k+1$. Besides, recall that any two cliques in a
projective plane intersect on a common vertex. Thus we couldn't
adopt two elements in $\bigcup_{i=1}^mS_{\mathcal{Q}}(vu_i)$ which
correspond to two cliques in $\mathcal{Q}$ of cardinality $k+1$ to
make $vu_{m+1},...,vu_t$ be adjacent to $vu_1,...,vu_m$, or
otherwise the representation set (turned out after we take
$\mathcal{F}(\mathcal{Q})$) of the vertex on which the two cliques
intersect and the representation sets of $vu_{m+1},...,vu_t$ would
overlap on more than one element (unless $t=m$, that is, $G$ is a
star). Nor could we adopt two elements in
$\bigcup_{i=1}^mS_{\mathcal{Q}}(vu_i)$ corresponding to two cliques
in $\mathcal{Q}$ respectively of cardinality $k,k+1$, for the same
reason. Now the only permissible choice is the adoption of elements
in $\bigcup_{i=1}^mS_{\mathcal{Q}}(vu_i)$ corresponding to the $k+1$
cliques in $\mathcal{Q}$ of cardinality $k$. The vertex, say $x$, on
which these $k+1$ cliques would intersect but for the deletion of
$x$ from the primitive projective plane of order $k$, having been
deleted, these $k+1$ cliques are pairwisely vertex-disjoint. (Recall
the property of one linear space that any two lines intersect on at
most one point.) There are altogether $k(k+1)=k^2+k$ vertices in
these $k+1$ cliques, tantamount to the sum total of vertices in
$\mathcal{Q}$. Thus we could utilize the $k+1$ elements
corresponding to these $k+1$ cliques in order that
$vu_{m+1},...,vu_t$ be adjacent to $vu_1,...,vu_m$. Note that this
method can be carried out only when $t=m+1$ for securing two
vertices from having their representation sets overlapping on more
than one element.

To summerize we have obtained the following lemma.

\begin{lem}\label{linegraph anti representation lem}
Let $G$ be a connected simple graph, and $G \neq K_3,K_4,3K_2\vee
K_1, \mbox{or } W_t, t\geq 2$. In addition, we suppose that $G$ is
not a star. And let $u_1^{(i)},...,u_{m_i}^{(i)}$ be all vertices in
$G$ of degree one and adjacent to $v_i$ with $d(v_i)>1$, while
$u_{m_i+1}^{(i)},...,u_{t_i}^{(i)}$ be  all vertices  in $G$ of
degree more  than one and adjacent to $v_i$. We suppose that there
are $k$ vertices with its degree more than one in $G$ in total which
are adjacent to some vertex of degree one, i.e., $1\leq i\leq k$.

Then for any $1\leq i\leq k$ so that $t_i=m_i+1$, we have exactly
four distinct minimum antichain representations of $L(G)$
respectively corresponding to four distinct methods for representing
the clique of $L(G)$ with vertex set
$\{v_iu_1^{(i)},...,v_iu_{t_i}^{(i)}\}$. Figure 7 illustrates these
four distinct methods where for illustration we suppose that $m_i=4$
in the upper three graphs and that $v_iu_1^{(i)},...,v_iu_6^{(i)}$
form a  projective plane of order $2$ with one vertex deleted in the
lowermost graph. Note that the method corresponding to the lowermost
graph of Figure 7 relies on the existence of projective plane with
$t_i$ vertices.
\begin{figure}
\centering
\includegraphics[width=0.8\textwidth]{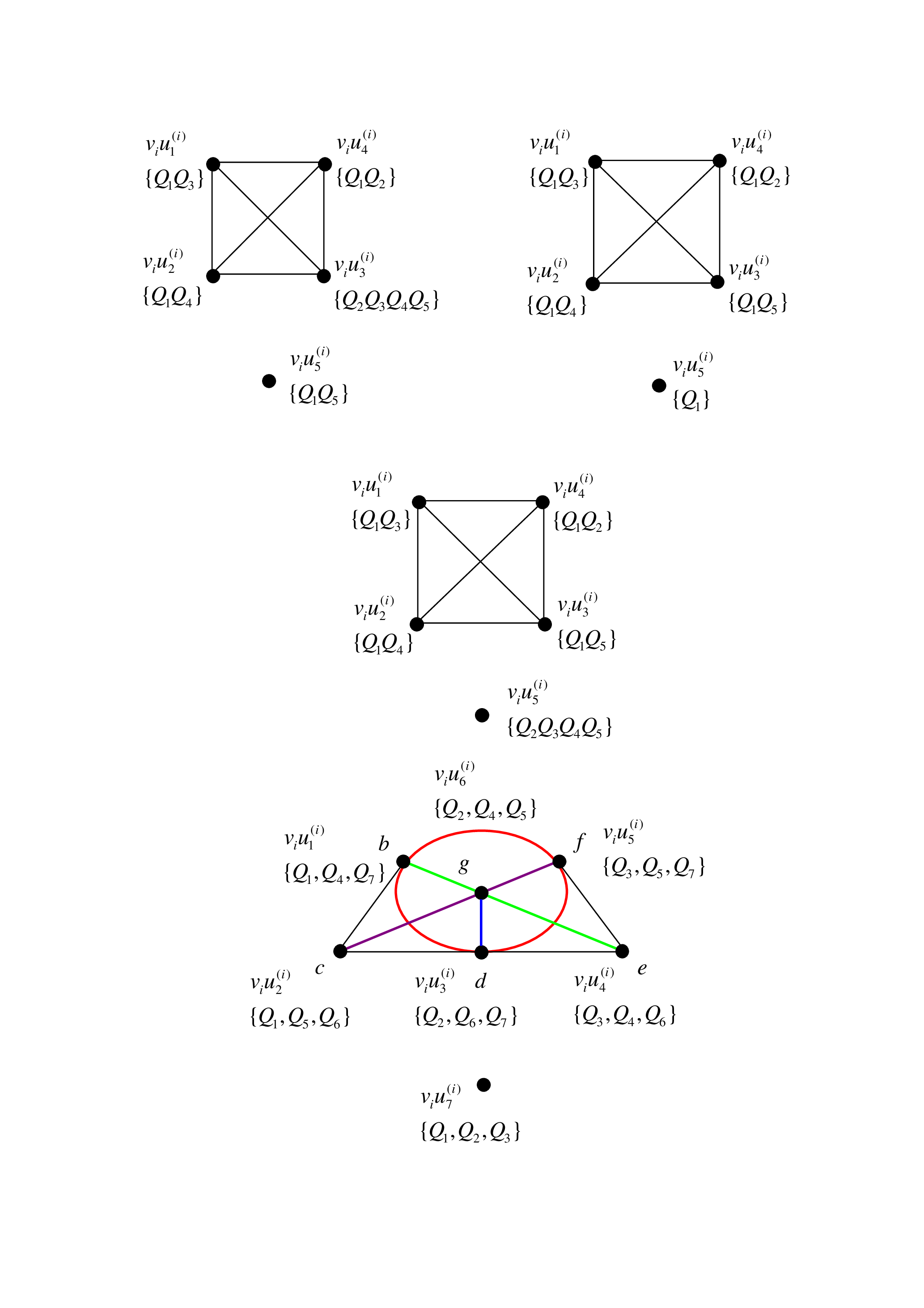}
\caption{Example}
\end{figure}

On the other hand, for any $1\leq i\leq k$ so that $t_i>m_i+1$, all
minimum antichain representations of $L(G)$ have the same method for
representing the clique of $L(G)$ with vertex set
$\{v_iu_1^{(i)},...,v_iu_{m_i}^{(i)}\}$, and for any vertex in
$\{v_iu_{m_i+1}^{(i)},...,v_iu_{t_i}^{(i)}\}$ which is not adjacent
to any other member of it, all minimum antichain representations of
$L(G)$ also have the same  method to make this vertex be adjacent to
$v_iu_1^{(i)},...,v_iu_{m_i}^{(i)}$ using elements in
$\bigcup_{j=1}^{m_i}S(v_iu_j^{(i)})$. Figure 8 illustrate this
unique method, where for illustration we suppose that $m_i=4$ and
$v_iu_{m_i+1}^{(i)}$ is a such vertex, which is not adjacent to any
other member of $\{v_iu_{m_i+1}^{(i)},...,v_iu_{t_i}^{(i)}\}$.
\begin{figure}
\centering
\includegraphics[width=0.6\textwidth]{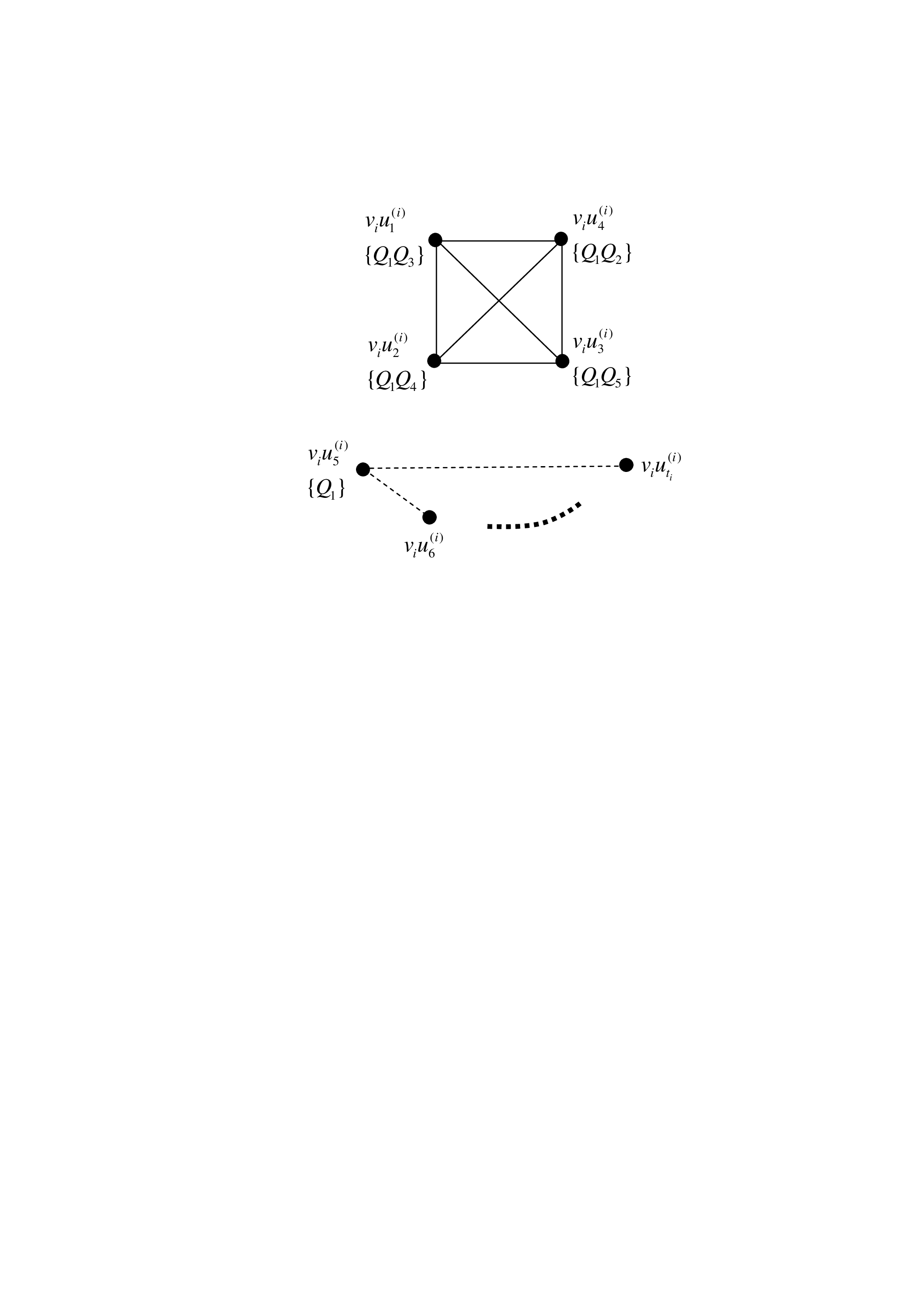}
\caption{Example}
\end{figure}
For any vertex in $\{v_iu_{m_i+1}^{(i)},...,v_iu_{t_i}^{(i)}\}$
which is adjacent to some other member of it, a minimum antichain
representation of $L(G)$ would make this vertex be adjacent to
$v_iu_1^{(i)},...,v_iu_{m_i}^{(i)}$ using elements in
$\bigcup_{j=1}^{m_i}S(v_iu_j^{(i)})$ by one of the two methods as
described in Figure 9, where for illustration we suppose that
$m_i=4$ and $v_iu_{m_i+1}^{(i)}$ is a such vertex, which is adjacent
to $v_iu_{m_i+2}^{(i)}$.

\begin{figure}
\centering
\includegraphics[width=0.9\textwidth]{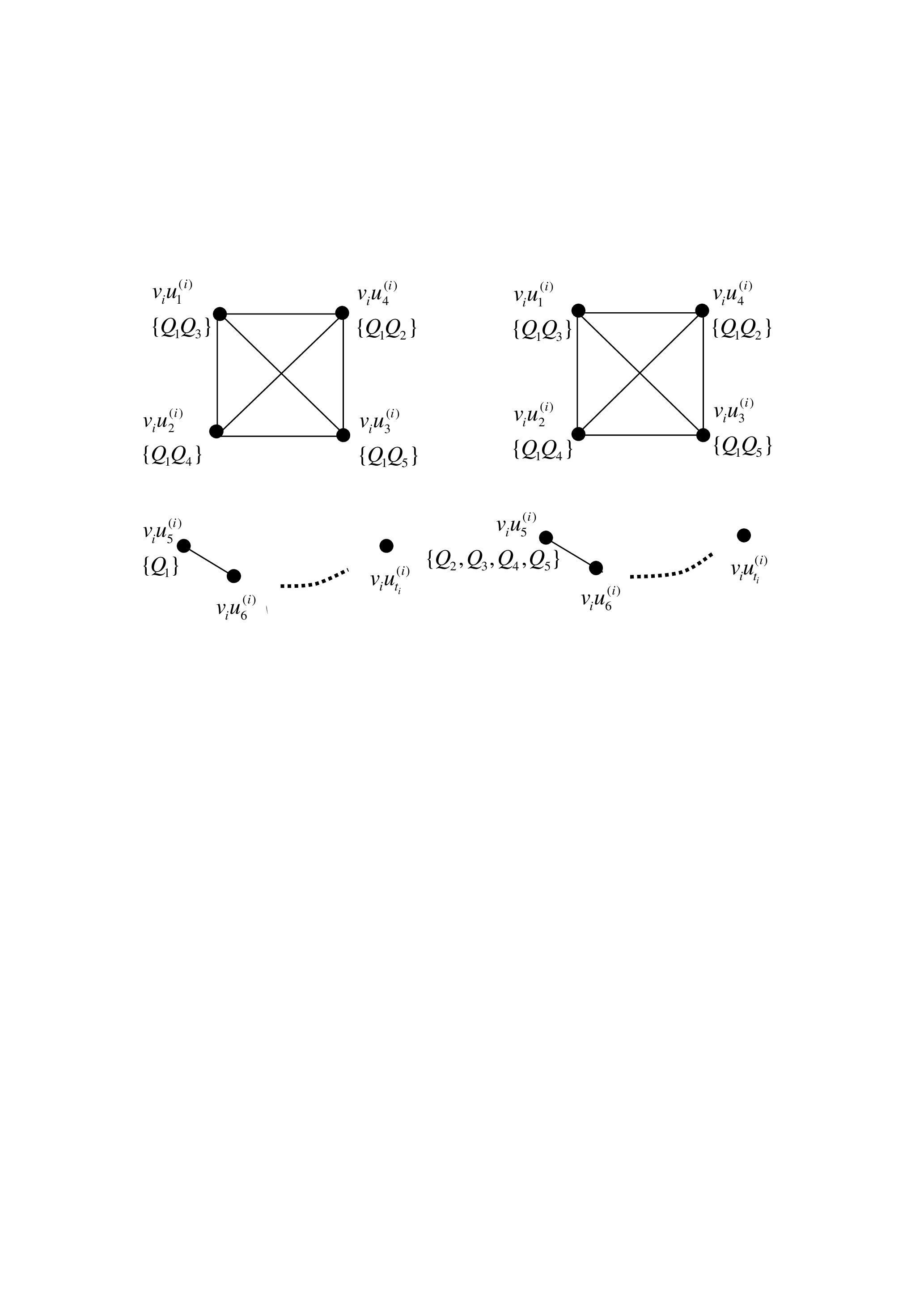}
\caption{Example}
\end{figure}

Note that once a vertex in $\{vu_{m_i+1}^{(i)},...,vu_{t_i}^{(i)}\}$
adopts the representation method as the right in Figure 9, then all
other vertices in $\{vu_{m_i+1}^{(i)},...,vu_{t_i}^{(i)}\}$ must all
adopt the representation method as the left in Figure 9, or
otherwise there will be two vertices in
$\{vu_{m_i+1}^{(i)},...,vu_{t_i}^{(i)}\}$ overlapping on more than
one element on their representation sets.
\end{lem}

Due to the above lemma, what is still vague is mere the case that
$d(u_1)=1$ and there is some triangle on $v$ in $G$, see Figure 10
where for illustration we suppose that $u_1,...,u_m$ are the all
vertices in $G$ adjacent to $v$ and with degree one, $u_{m+1}$ is a
vertex adjacent to $v$ and with degree at least two so that there is
no triangle in $G$ containing the edge $vu_{m+1}$, and
$v,u_{m+2},u_{m+3}$ form a triangle in $G$.

\begin{figure}
\centering
\includegraphics[width=0.6\textwidth]{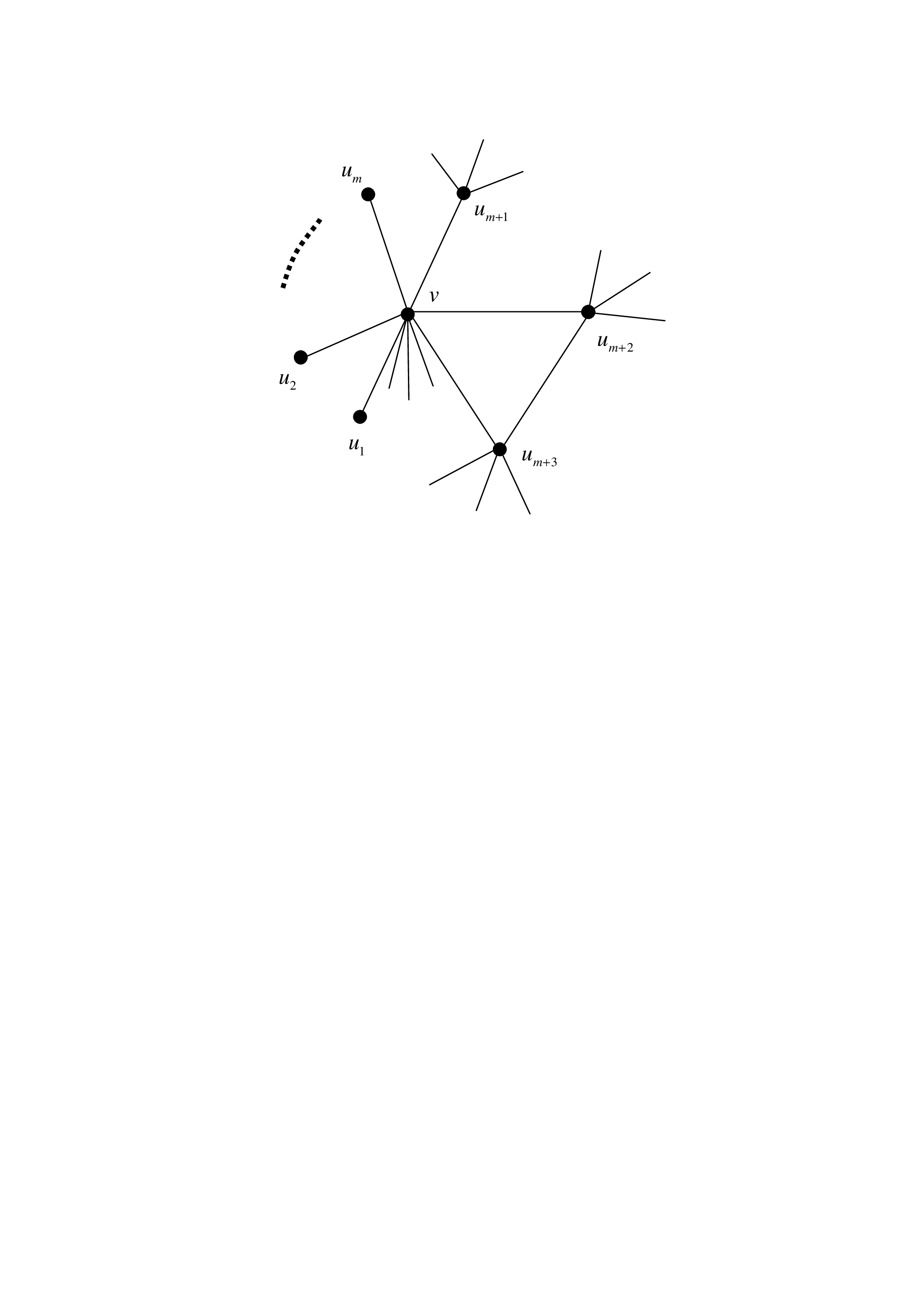}
\caption{Example}
\end{figure}

By Lemma~\ref{linegraph anti representation lem}, if only we can
prove that using the method as the left in Figure 9 is always not
worst than the one as the right in Figure 9 in sense of the intent
to minimize a representation of $L(G)$, where $G$ is connected,
$\neq K_3,K_4,3K_2\vee K_1,\mbox{ or } W_t, t\geq 2$ and is not a
star, and characterize all situations under which the two methods in
Figure 9 is equally fine, then we can determine the antichain
intersection number of any line graph and whether or not any line
graph is uniquely intersectable with respect to antichain.

We examine the method as the left in Figure 9. In this method, refer
to Figure 10, we use one element to make the vertices
$vu_1,...,vu_t$, where we say that $d(v)=t$, be adjacent to each
other, and use $m$ monopolized elements respectively in the
representation sets of $vu_1,...,vu_m$. Thus in the whole $L(G)$, we
use $|V_2(G)|+\sum_{i=1}^km_i$ elements, where $V_2(G)$ denote the
set of vertices of degree at least two in $G$ and we let $v_i,1\leq
i\leq k$ be all vertices of degree more than one in $G$ which is
adjacent to some vertex of degree one and for $1\leq i\leq k$,
$u_1^{(i)},...,u_{m_i}^{(i)}$ be all vertices in $G$ of degree one
and adjacent to $v_i$.

Immediately after we examine the method as the right in Figure 9. In
this method, refer to Figure 10, we use $m$ elements to make
$vu_{m+2}$ be adjacent to $vu_1,...,vu_m$, respectively; and use one
more element to make all $u_i$ with $1\leq i\leq t$ and $i\neq m+2$
be adjacent to each other. Note that now we have made use of $m+1$
elements, that is exactly equal to the number of elements we should
have used for the $v$-star if we had adopted the left method in
Figure 9. But now we should use still another element to make
$vu_{m+2}$ be adjacent to  $vu_{m+1}$ (unless $u_{m+1}$ is adjacent
to $u_{m+2}$ and thus we can shake off the responsibility to make
$vu_{m+2}$ be adjacent to $vu_{m+1}$ to the triangle
$\{vu_{m+1},vu_{m+2},u_{m+1}u_{m+2}\}$ just like how we will deal
with the responsibility to make $vu_{m+2}$ be adjacent to
$vu_{m+3}$). But even if $u_{m+1}$ is adjacent to $u_{m+2}$, where
to dispose of the triangle $\{vu_{m+1},vu_{m+2},u_{m+1}u_{m+2}\}$?
If only $d(u_{m+1})=2$, we can shake off this triangle to the star
$\{vu_{m+1},u_{m+1}u_{m+2}\}$. Thus to attain a minimum antichain
representation we should have either that $u_{m+1}$ is adjacent to
$u_{m+2}$ and $d(u_{m+1})=d(u_{m+3})=2$, or that $d(u_{m+1})=1$ and
$d(u_{m+3})=2$. For the latter case, see Figure 11, where note that
by symmetry we also have all neighbors of $u_{m+2}$ being of degree
one.
\begin{figure}
\centering
\includegraphics[width=0.6\textwidth]{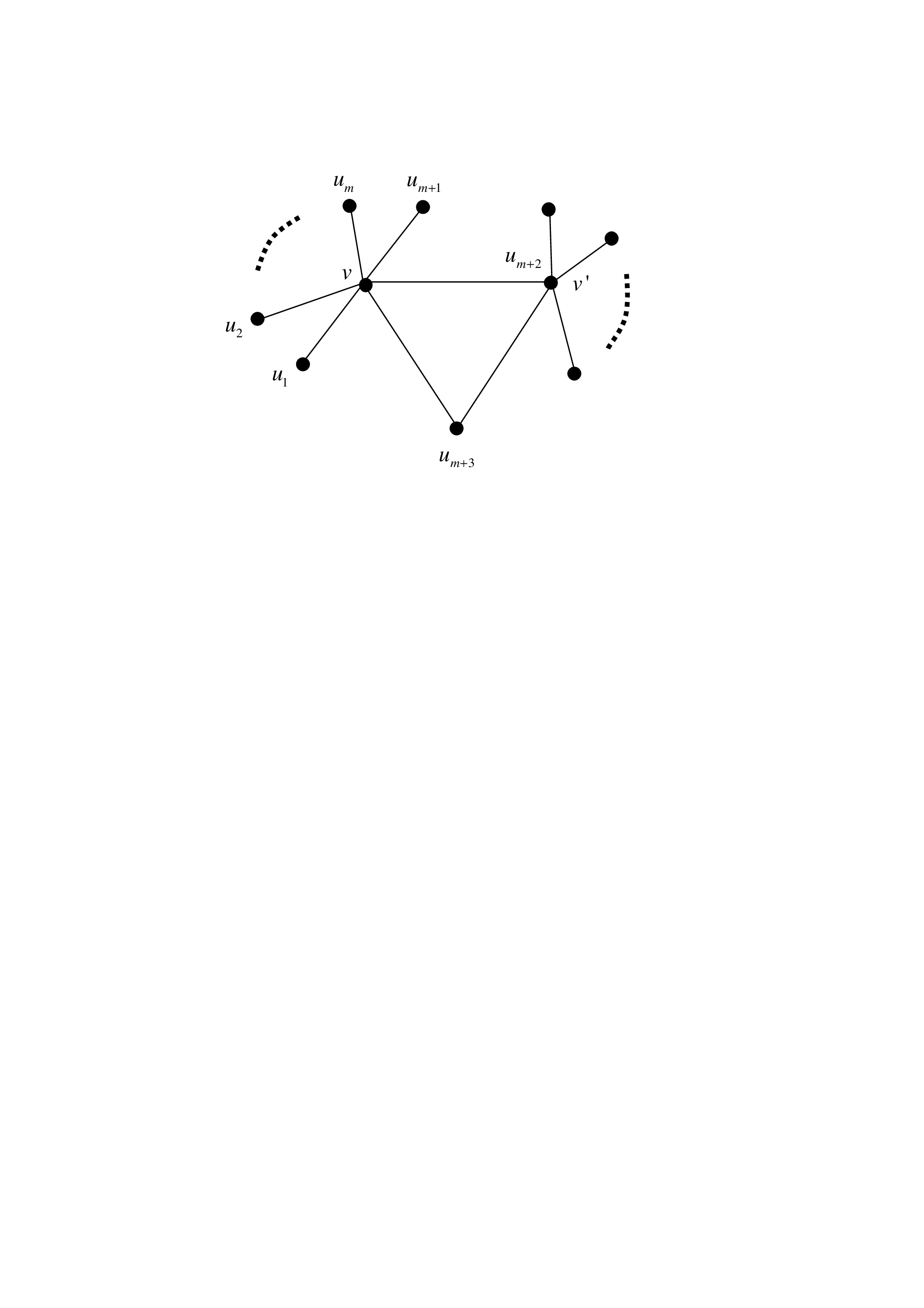}
\caption{Example}
\end{figure}
When $d(u_{m+1})=1$ and  $d(u_{m+2})=d(u_{m+3})=2$, see Figure 12.
\begin{figure}
\centering
\includegraphics[width=0.5\textwidth]{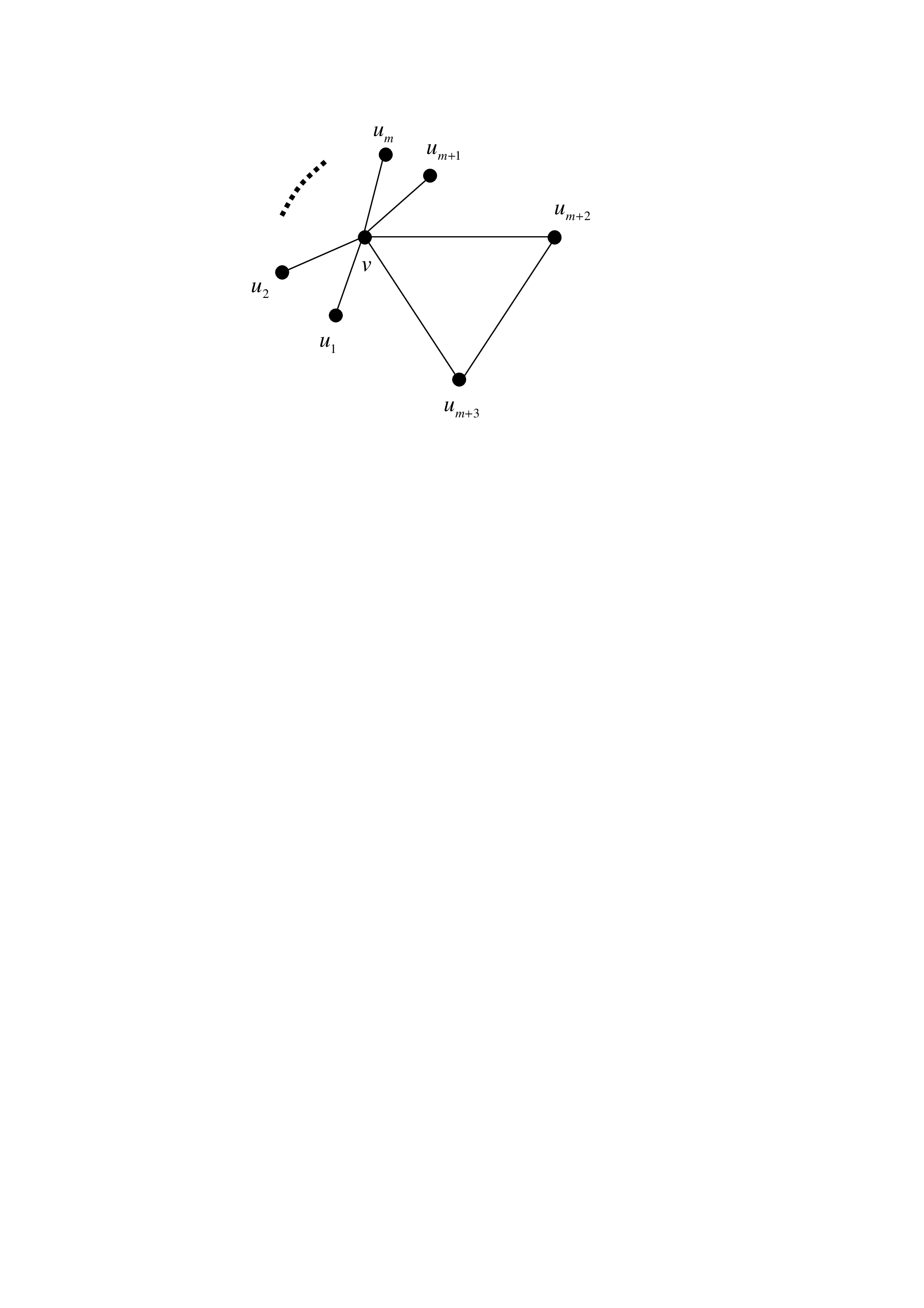}
\caption{Example}
\end{figure}
As for the case that $u_{m+1}$ is adjacent to $u_{m+2}$ and
$d(u_{m+1})=d(u_{m+3})=2$, see Figure 13. There is another case
left, see Figure 14. Therefore we complete the proof.
\end{proof}
\begin{figure}
\centering
\includegraphics[width=0.5\textwidth]{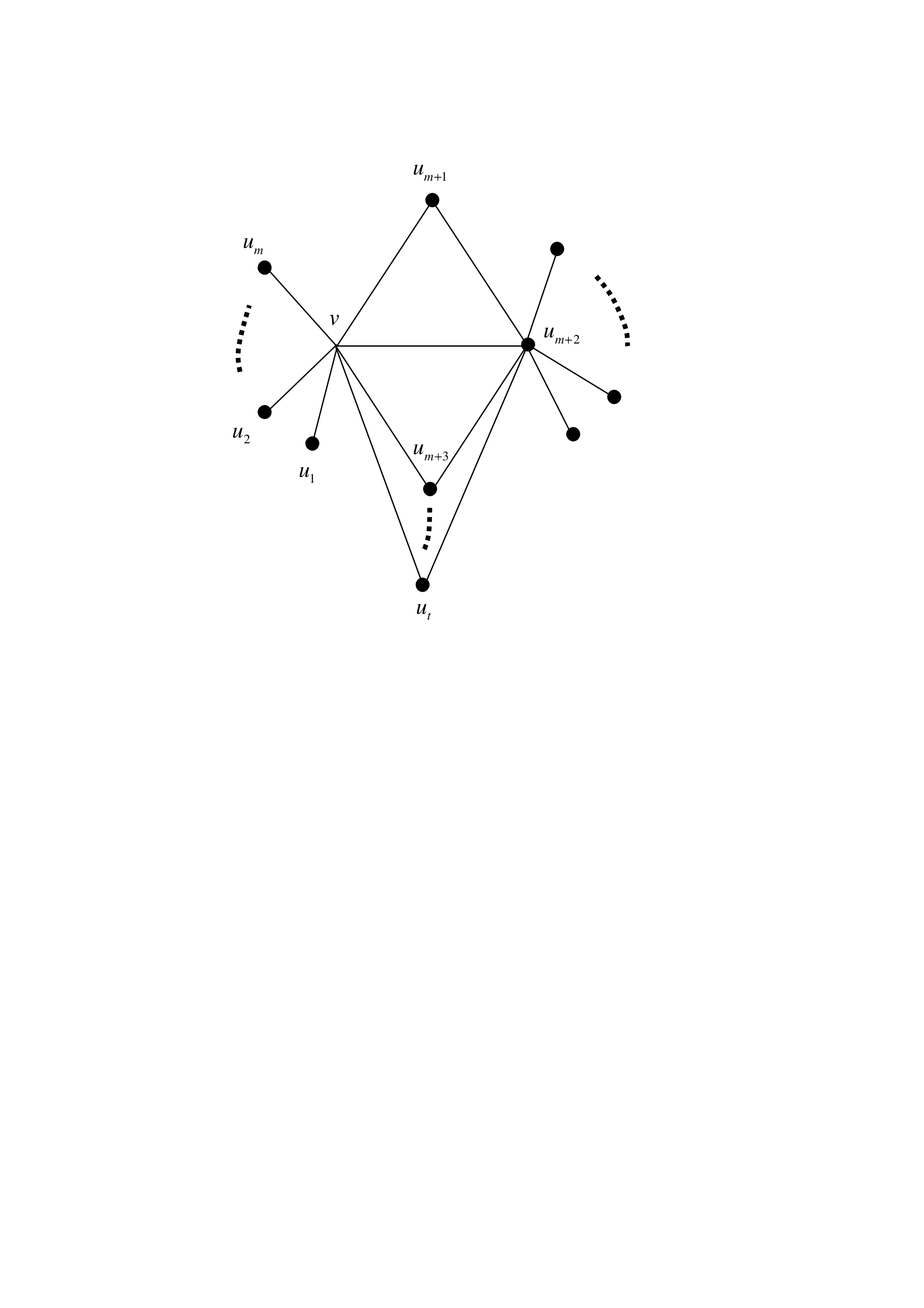}
\caption{Example}
\end{figure}
\begin{figure}
\centering
\includegraphics[width=0.5\textwidth]{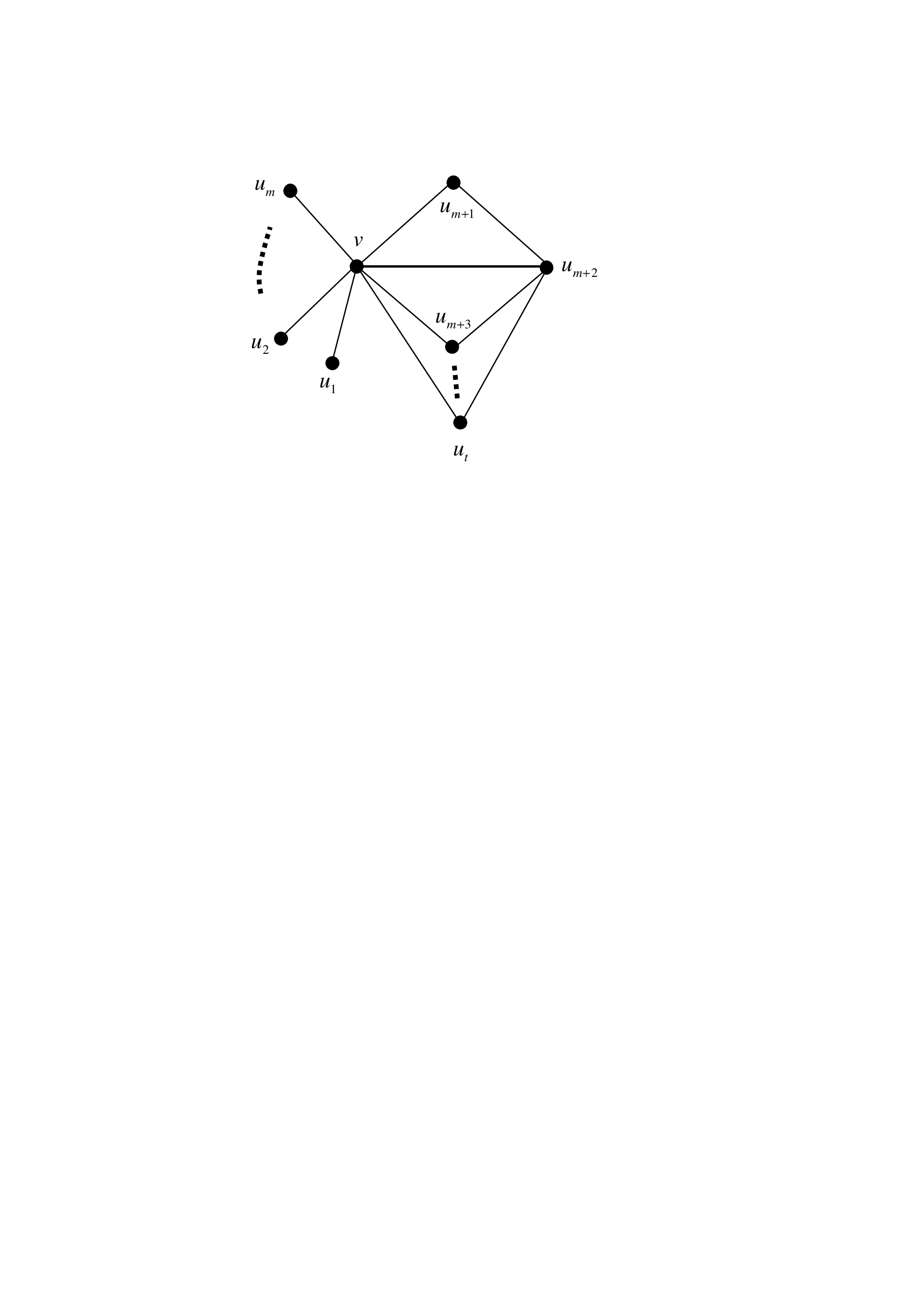}
\caption{Example}
\end{figure}

In Figure 11, if we use the triangle
$\{vu_{m+2},vu_{m+3},u_{m+2}u_{m+3}\}$, i.e., use one element to
make the three vertices $vu_{m+2},vu_{m+3},u_{m+2}u_{m+3}$ be
adjacent to each other, and either use one more element to make all
$vu_i$ with $1\leq i\leq m+3$ and  $i\neq m+2$ be adjacent to each
other and $m+1$ more elements to respectively make $vu_{m+2}$ be
adjacent to $vu_1,...,vu_{m+1}$ or exchange the roles of $u_{m+2}$
and $u_{m+3}$ and do the same as before, and then similarly for the
$v'$-star, then we can obtain four more distinct minimum antichain
representations different from the "canonical one".

In Figure 12, if we use the triangle
$\{vu_{m+2},vu_{m+3},u_{m+2}u_{m+3}\}$, and use one more element to
make all $vu_i$ with $1\leq i\leq m+3,i\neq m+2$ be adjacent to each
other, and use $m+1$ more elements to make $vu_{m+2}$ be adjacent to
$vu_1,...,vu_{m+1}$, respectively, and then attach one monopolized
element to the representation set of $u_{m+2}u_{m+3}$ then we obtain
one more minimum antichain representation other than "the canonical
one".

In Figure 13, if we use the $t-m$ triangles
\begin{align*}
\{vu_{m+1},vu_{m+2},u_{m+1}&u_{m+2}\},\{vu_{m+2},vu_{m+3},u_{m+2}u_{m+3}\},\\
...,\{&vu_{m+2},vu_{t},u_{m+2}u_{t}\},
\end{align*}
and use one more element to make all $vu_i$ with $1\leq i\leq
t,i\neq m+2$ be adjacent to each other, and use $m$ more elements to
make $vu_{m+2}$ be adjacent to $vu_1,...,vu_m$,  respectively, and
then do the same for the $u_{m+2}$-star, we will obtain one more
minimum antichain representation other than ``the canonical one''.

In Figure 14, if we use the $t-m$ triangles
\begin{align*}
\{vu_{m+1},vu_{m+2},u_{m+1}&u_{m+2}\},\{vu_{m+2},vu_{m+3},u_{m+2}u_{m+3}\},\\
...,\{&vu_{m+2},vu_{t},u_{m+2}u_{t}\},
\end{align*}
and use one more element to make all $vu_i$ with $1\leq i\leq
t,i\neq m+2$ be adjacent to each other, and then use one more
element to make all $u_{m+2}u_i$ with $m+1\leq i\leq t$ and $i\neq
m+2$ be adjacent to each other, we will obtain one more minimum
antichain representation other than "the canonical one".

\section{\bf{Conclusion Remarks}}

Edge clique partitions, as a special case of edge clique covers, are
served as great classifying and clustering tools in many practical
applications, therefore it is interesting to explore the concept in
more details.

One may keep working on the set representations of graphs in various
senses. Also the relationships among these various representations
are interesting to be explored further.

\vspace*{0.8cm}
\noindent {\bf{Acknowledgements}}\\

We are very much grateful to the support from National Science
Council of Taiwan, Republic of China, under the grants NSC
96-2115-M-029-001 and NSC 96-2115-M-029-007.

\end{document}